\definecolor{black}{rgb}{0.0, 0.0, 0.0}
\definecolor{red}{rgb}{1.0, 0.5, 0.5}
\newcommand{\margnote}[1]{
\ifthenelse{\boolean{shownotes}}%
{\marginpar{\raggedright\tiny\texttt{#1}}}%
{}%
}
\newcommand{\hole}[1]{
\ifthenelse{\boolean{shownotes}}%
{\begin{center} \fbox{ \rule {.25cm}{0cm} \rule[-.1cm]{0cm}{.4cm}
\parbox{.85\textwidth}{\begin{center} \texttt{#1}\end{center}} \rule
{.25cm}{0cm}}\end{center}} {} }
\def\d{\,\mathrm{d}}
\def\tot#1#2{\frac{\d #1}{\d #2}}
\def\eps{\varepsilon}
\title[Hydrodynamic Cucker-Smale model]{Hydrodynamic Cucker-Smale model with normalized communication weights and time delay}
\author[Choi]{Young-Pil Choi}
\address[Young-Pil Choi]{\newline Department of Mathematics
    \newline  Inha University, Incheon 402-751, Republic of Korea}
\email{ypchoi@inha.ac.kr}
\author[Haskovec]{Jan Haskovec}
\address[Jan Haskovec]{\newline Computer, Electrical and Mathematical Sciences \& Engineering
    \newline King Abdullah University of Science and Technology, 23955 Thuwal, KSA}
\email{jan.haskovec@kaust.edu.sa}
\numberwithin{equation}{section}
\newtheorem{theorem}{Theorem}[section]
\newtheorem{lemma}{Lemma}[section]
\newtheorem{remark}{Remark}[section]
\newtheorem{definition}{Definition}[section]
\newtheorem{assumption}{Assumption}
\newtheorem{convention}{Convention}
\def\({\begin{eqnarray}}
\def\){\end{eqnarray}}
\def\[{\begin{eqnarray*}}
\def\]{\end{eqnarray*}}
\def\Norm#1{\left\| #1 \right\|}
\newcommand{\R}{\mathbb R}
\newcommand{\om}{\Omega}
\newcommand{\ls}{\lesssim}
\newcommand{\mc}{\mathcal C}
\newcommand{\bq}{\begin{equation}}
\newcommand{\eq}{\end{equation}}
\newcommand{\e}{\varepsilon}
\newcommand{\lt}{\left}
\newcommand{\rt}{\right}
\newcommand{\pa}{\partial}
\newcommand{\wt}{\widetilde}
\newcommand{\Om}{\Omega}
\newcommand{\ppsi}{\psi_{t,\tau}[\eta]}
\def\Lyap{\mathcal{L}}
\def\N{\mathbb{N}}
\def\d{\mathrm{d}}
\def\wpsi{\widetilde\psi}
\begin{document}
\allowdisplaybreaks

\date{\today}

\subjclass[]{Primary: 35Q35, 35B40; Secondary: 35Q83.}
\keywords{Cucker-Smale model, presureless Euler system, classical solution, time delay, asymptotic behavior, flocking}

\begin{abstract}
We study a hydrodynamic Cucker-Smale-type model with time delay in communication and information processing, in which agents interact with each other through normalized communication weights. The model consists of a pressureless Euler system with time delayed non-local alignment forces. We resort to its Lagrangian formulation and prove the existence of its global in time classical solutions. Moreover, we derive a sufficient condition for the asymptotic flocking behavior of the solutions. Finally, we show the presence of a critical phenomenon for the Eulerian system posed in the spatially one-dimensional setting.
\end{abstract}

\maketitle \centerline{\date}

\tableofcontents

%
%
%
%
\section{Introduction}
We study the existence of global classical solutions and asymptotic behavior of the
following system of pressureless Euler equations with time delayed non-local alignment forces:
\(   \label{Eul1}
\pa_t \rho_t + \nabla \cdot (\rho_t u_t) &=& 0, \\
\pa_t (\rho_t u_t) + \nabla \cdot (\rho_t u_t \otimes u_t) &=&
   \rho_t\frac{\int_{\R^d} \psi(x-y) \rho_{t-\tau}(y)u_{t-\tau}(y)\,dy}{\int_{\R^d} \psi(x-y) \rho_{t-\tau}(y)\,dy} - \rho_t u_t, \label{Eul2}
\)
for $t\geq 0$ and $x\in\R^d$ with $d\in\N$ the space dimension.
The constant $\tau  \geq 0$ denotes the fixed delay in communication and information processing.
Here and in the sequel we denote by the subscript $\{\cdot\}_t$ the time-dependence of the respective variable.
The \emph{influence function} $\psi: \R^d \to \R_+$ satisfies the following set of assumptions:

\begin{assumption}\label{ass:psi}
The influence function $\psi$ is continuous and continuously differentiable on $\R^d$
with uniformly bounded derivatives up to order $\ell \in \N \cup \{0\}$. Moreover, it is radially symmetric, i.e., there exists a function
$\wpsi: [0,+\infty) \to (0,+\infty)$ such that
\[
   \psi(x) = \wpsi(|x|)\qquad\mbox{for all }x\in\R^d.
\]
The function $\wpsi$ is nonincreasing, positive and uniformly bounded on $[0,+\infty)$.
Without loss of generality, we assume $\wpsi(0)=1$.
\end{assumption}
Let us note that the (rescaled) influence function introduced in the seminal papers
by Cucker and Smale \cite{CS1, CS2}, namely
\bq\label{psi_cs}
\psi(x) = \frac{1}{(1 + |x|^2)^\beta}
\eq
with $\beta \geq 0$, satisfies the above set of assumptions.

Associated to the fluid velocity $u_t$, we define the characteristic flow $\eta_t: \R^d \to \R^d$ by
\(\label{eta_flow}
   \frac{\d \eta_t(x)}{\d t} = u_t(\eta_t(x)) \quad\mbox{for  } t \geq -\tau, \qquad \mbox{subject to } \eta_{0}(x) = x \in \R^d.
\)
Note that, emanating from $\eta_{0}(x) = x$ at $t=0$, we solve \eqref{eta_flow} both forward and backward in time to obtain the characteristics for $t \geq -\tau$. Let us denote the time-varying set $\Omega_t := \{x \in \R^d\,:\, \rho_t(x) \neq 0 \}$ for given initially bounded open set $\Omega_0$. 

The system \eqref{Eul1}--\eqref{Eul2} is considered subject to the initial data
\(\label{EulIC}
   (\rho_s(x), u_s(x)) = (\bar \rho_s(x),\bar u_s(x)) \qquad\mbox{for } (s,x) \in [-\tau,0]\times \Omega_s.
\)
Since the total mass is conserved in time, without loss of generality, we may assume that $\rho_t$ is a probability density function, i.e., $\|\rho_t \|_{L^1} = 1$ for all $t \geq 0$ by assuming $\|\rho_0\|_{L^1} = 1$. 



The system \eqref{Eul1}--\eqref{Eul2} can be formally derived
from the kinetic Cucker-Smale type model, introduced and studied in \cite{Choi-Haskovec},
\( \label{kin_mt}
   \partial_t f_t + v \cdot \nabla_x f_t + \nabla_v \cdot (F[f_{t-\tau}]f_t) = 0 \qquad\mbox{for } (x,v) \in \R^d \times \R^d,
\)
with
\[
   F[f_{t-\tau}](x,v) := \displaystyle \frac{\int_{\R^{2d}} \psi(x-y)(w-v)f(y,w,t-\tau)\,\d y\d w}{\int_{\R^{2d}} \psi(x-y)f(y,w,t-\tau)\,\d y\d w}.
\]
Here the one-particle distribution function $f_t=f_t(x,v)$ is
a time-dependent probability measure on the phase space $\R^d\times\R^d$,
describing the probability of finding a particle at time $t\geq 0$ located at $x\in\R^d$
and having velocity $v\in\R^d$.
The normalization in the expression for the interaction force $F[f_{t-\tau}]$
is similar to the one introduced in \cite{MT}.
We define the mass and momentum densities by
\[
   \rho_t(x) := \int_{\R^d} f_t(x,v) \,\d v,\qquad \rho_t(x)u_t(x) := \int_{\R^d} v f_t(x,v) \,\d v.
\]
Then, \eqref{Eul1} is obtained directly by integrating the Vlasov equation \eqref{kin_mt}
with respect to $v$, while \eqref{Eul2} follows from taking the first-order moment with respect to $v$
and adopting the monokinetic closure $f_t(x,v) = \rho_t(x) \delta(v-u_t(x))$.

In \cite{Choi-Haskovec} we proved the global existence and uniqueness of measure-valued solutions of \eqref{kin_mt}.
Moreover, we provided a stability estimate in terms of the Monge-Kantorowich-Rubinstein distance,
and, as a direct consequence, an asymptotic flocking result for the kinetic system. 
Here we shall extend these results for the hydrodynamic system \eqref{Eul1}--\eqref{Eul2}.
In particular, we shall study the existence of global classical solutions, their asymptotic behavior
(commonly referred to as \emph{flocking}) and propagation of smoothness.
We refer to \cite{CCP, CHL} for a recent overview of emergent dynamics of the Cucker-Smale model
and its variants.

\begin{definition}\label{def:classical}
We call $(\rho_t, u_t)$ a classical solution of the system \eqref{Eul1}--\eqref{Eul2} on $[0,T)$,
subject to the initial datum \eqref{EulIC}, if $\rho_t$ and $u_t$ are continuously differentiable 
functions on the set $\{(t,x)\in [0,T)\times\Omega_t\}$, the characteristics $\eta_t = \eta_t(x)$
defined by \eqref{eta_flow} are diffeomorphisms for all $t\in [0,T)$,
and $\rho_t$ and $u_t$ satisfy the equations \eqref{Eul1}--\eqref{Eul2}
pointwise in $\{(t,x)\in [0,T)\times\Omega_t\}$, with the initial datum \eqref{EulIC}.
The time derivative at $t=0$ has to be understood as a one-sided derivative.
\end{definition}

The aim of this paper is to prove the existence of classical solutions of the system \eqref{Eul1}--\eqref{Eul2} and to study their asymptotic behavior for large times; in particular, we shall give sufficient conditions that lead to \emph{asymptotic flocking} in the sense of the definition of Cucker and Smale \cite{CS1, CS2}, see statement \eqref{statement2} of Theorem \ref{thm:flocking}. We carry out this program by resorting to the Lagrangian formulation of the system \eqref{Eul1}--\eqref{Eul2}. This is derived in Section \ref{sec:main}, where we also state our main results. For global existence of solutions, we provide two different strategies: first one is based on Cauchy-Lipschitz theory, which is usually used for constructing a solution to systems of differential equations. This gives the global existence of smooth solutions for the Lagrangian system with only continuous initial datum, any further smoothness or smallness of the initial datum are not required. However, unfortunately, this argument cannot be applied to the case $\tau = 0$, i.e., no time delay. Furthermore, it is not that clear how to shift the existence result of Lagrangian system to the Eulerian system. On the other hand, the second strategy is based on the energy method combined with the large-time behavior estimate of solutions. This also does not require any smallness assumption for the initial data and provides the initial regularity persists globally in time. However, compared to the first strategy, we need additional assumptions used for the large-time behavior estimate, see Theorem \ref{thm_main}. Despite such assumptions, this strategy can be directly applied to the case of no time delay, $\tau = 0$, and the global existence of classical solutions to the Eulerian system \eqref{Eul1}-\eqref{Eul2} if we further assume that the initial datum are small enough. It is worth mentioning that the smallness assumption on the datum is not needed to construct the global-in-time classical solutions for the Lagrangian system \eqref{Lagr1}-\eqref{Lagr2}. 

The rest of the paper is organized as follows. As mentioned above, we discuss the derivation of the Largangian system from the Eulerian system \eqref{Eul1}-\eqref{Eul2} and present our main results on the global existence of solutions, the large-time behavior  of solutions, and the critical phenomena for the Eulerian system in the one dimensional case. Continuous solutions of the Lagrangian formulation are constructed in Section \ref{sec:existence}. In Section \ref{sec:flocking} we study the asymptotic flocking behavior of the solutions, and in Section \ref{sec:classical} we prove the existence of global classical solutions of the system \eqref{Eul1}--\eqref{Eul2} in the sense of Definition \ref{def:classical}. Finally, in Section \ref{sec:critical} we show the presence of a critical phenomenon for the Eulerian system posed in the spatially one-dimensional setting.

%
%
\section{Lagrangian formulation and main results}\label{sec:main}
In the sequel we shall work with the Lagrangian formulation of the system \eqref{Eul1}--\eqref{Eul2}.
For $x\in\Omega_0$ we introduce the functions
\(  \label{hv}
   h_t(x):= \rho_t(\eta_t(x)),\qquad v_t(x) := u_t(\eta_t(x)),
\)
where $\eta_t$ is the characteristic flow defined in \eqref{eta_flow}.
Then, we formally rewrite the system \eqref{Eul1}--\eqref{Eul2} as
\begin{align}\label{Lagr1}
\begin{aligned}
\frac{\d \eta_t(x)}{\d t} &= v_t(x),\\
\frac{\d v_t(x)}{\d t} 
&= \frac{\int_{\Om_0} \psi(\eta_t(x) - \eta_{t-\tau}(y)) \rho_0(y)v_{t-\tau}(y)\,dy}{\int_{\Om_0} \psi(\eta_t(x) - \eta_{t-\tau}(y)) \rho_0(y)\,dy} - v_t(x),
\end{aligned}
\end{align}
and
\(   \label{Lagr2}
   h_t(x) = \rho_0(x) \det(\nabla \eta_t(x))^{-1}.
\)
We refer to \cite{DS, HKK15} for details. The system \eqref{Lagr1} is subject to the initial datum
\(   \label{LagrIC}
   v_s(x) := \bar u_s(\eta_s(x))\qquad\mbox{for } s\in[-\tau,0],\; x\in\Omega_0.
\)
Classical solutions of the system \eqref{Lagr1}--\eqref{LagrIC} on $[0,T)\times\Omega_0$
are defined analogously to Definition \ref{def:classical}.
Then, as long as the characteristic flow $\eta_t$ given by \eqref{eta_flow}
is a diffeomorphism between $\Omega_0$ and $\Omega_t$,
\eqref{hv} defines an equivalence between the classical solutions
of the Eulerian system \eqref{Eul1}--\eqref{EulIC}
and the classical solutions of the Lagrangian formulation
\eqref{Lagr1}--\eqref{LagrIC}.

Observe that the equation \eqref{Lagr2} for the mass density $h_t$ is decoupled from the system
\eqref{Lagr1} for $(\eta_t, v_t)$. Therefore, our first main result establishes the global in time existence
of solutions of \eqref{Lagr1}. The mass density $h_t$ is then calculated as a post-processing step,
assuming that $\eta_t$ is a diffeomorphism, i.e., that the matrix $\nabla\eta_t$ is invertible.

\begin{theorem}\label{thm:existence}
Let Assumption \ref{ass:psi} be verified and $\tau > 0$.
Suppose that the initial datum $(\eta_s, v_s)\in\mc([-\tau,0]\times\overline\Omega_0)$.
Then there exists a unique global in time solution $\eta_t\in\mc^1([0,\infty); \mc(\overline{\Omega_0}))$,
$v_t\in\mc([0,\infty)\times\overline\Omega_0)$ of the system \eqref{Lagr1}, satisfying
\(   \label{global_v}
   \Norm{v_t}_{L^\infty([0,\infty)\times \Omega_0)} \leq \max_{s\in[-\tau,0]} \Norm{v_s}_{L^\infty(\Omega_0)}.
\)
\end{theorem}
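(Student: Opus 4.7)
The plan is to use the \emph{method of steps}, which exploits the strict positivity $\tau>0$ to convert the delay-differential system \eqref{Lagr1} into a sequence of non-delayed Cauchy problems in the Banach space $X := \mc(\overline{\Omega_0};\R^d)$. On the initial interval $[0,\tau]$ the delayed arguments $\eta_{t-\tau}$ and $v_{t-\tau}$ are prescribed by the initial datum \eqref{LagrIC}, so the right-hand side of \eqref{Lagr1} becomes a known time-dependent functional of the current state $\eta_t$ alone,
\begin{equation*}
A_t[\eta](x) := \frac{\int_{\Omega_0}\psi(\eta(x) - \eta_{t-\tau}(y))\,\rho_0(y)\,v_{t-\tau}(y)\,dy}{\int_{\Omega_0}\psi(\eta(x) - \eta_{t-\tau}(y))\,\rho_0(y)\,dy},
\end{equation*}
and \eqref{Lagr1} reduces to the abstract ODE $\tot{\eta_t}{t} = v_t$, $\tot{v_t}{t} = A_t[\eta_t] - v_t$ posed in $X\times X$.

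For local existence I would invoke the Cauchy--Lipschitz theorem in Banach spaces. Restricting to a ball $\|\eta\|_X \leq R$ confines the argument $\eta(x) - \eta_{t-\tau}(y)$ to a fixed bounded subset of $\R^d$ on which $\psi$ is continuous and strictly positive (Assumption~\ref{ass:psi}); the denominator of $A_t$ is therefore bounded below by a positive constant, and combined with the boundedness of $\nabla\psi$ this renders $\eta\mapsto A_t[\eta]$ Lipschitz on that ball, with constant uniform in $t\in[0,\tau]$. The crucial a priori bound then follows from the observation that, with positive weights $\psi(\cdot)\rho_0$, the quantity $A_t[\eta](x)$ is a convex combination of the values $\{v_{t-\tau}(y):y\in\Omega_0\}$, so componentwise $|A_t[\eta](x)|\leq\|v_{t-\tau}\|_{L^\infty(\Omega_0)}$. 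Applying Duhamel's formula to $\tot{v_t}{t}+v_t=A_t[\eta_t]$ immediately gives
\begin{equation*}
\|v_t\|_{L^\infty(\Omega_0)} \leq e^{-t}\|v_0\|_{L^\infty(\Omega_0)} + (1-e^{-t})\,M_0 \leq M_0, \qquad t\in[0,\tau],
\end{equation*}
where $M_0:=\max_{s\in[-\tau,0]}\|v_s\|_{L^\infty(\Omega_0)}$. This precludes finite-time blow-up and extends the local solution to a global one on $[0,\tau]$.

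To reach $[0,\infty)$ I would then induct on time steps: taking $(\eta_t,v_t)|_{[0,\tau]}$ as the new delay data, repeat the reduction and the Cauchy--Lipschitz argument on $[\tau,2\tau]$, then on $[2\tau,3\tau]$, and so on; the convex-combination argument propagates $\|v_t\|_{L^\infty}\leq M_0$ through every step and delivers \eqref{global_v}. The main obstacle I anticipate is the progressive deterioration of the lower bound on the denominator of $A_t$: since $\wpsi$ is only nonincreasing, with no positive lower bound at infinity, the Lipschitz constant of $A_t[\cdot]$ worsens as the trajectories $\eta_t(\Omega_0)$ spread in space. This has no effect on the $L^\infty$ control of $v_t$, but it forces the Cauchy--Lipschitz step to be carried out on each compact window $[k\tau,(k+1)\tau]$ separately, using the fact that by $M_0$ the characteristics remain in a ball of radius $\mathrm{diam}(\Omega_0)+tM_0$ on that window.
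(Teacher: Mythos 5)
Your proposal is correct and follows the same skeleton as the paper's proof: the method of steps on the windows $[k\tau,(k+1)\tau]$, a Cauchy--Lipschitz local existence argument whose Lipschitz constant comes from bounding the denominator below by $\wpsi\bigl(R+\max_{s}\Norm{\eta_s}_{L^\infty}\bigr)>0$ on a ball and using the boundedness of $\nabla\psi$, and the convex-combination bound $|A_t[\eta](x)|\leq\Norm{v_{t-\tau}}_{L^\infty}$ to preclude blow-up and propagate \eqref{global_v} across steps. The one genuine difference is the functional setting: the paper applies the finite-dimensional Cauchy--Lipschitz theorem separately for each fixed $x\in\Omega_0$, obtaining a family of ODE solutions parametrized by $x$, and must then prove \emph{a posteriori} that continuity in $x$ is propagated --- this is where its Lemma \ref{lem:growth} enters, via an integral inequality for $|v_t(x)-v_t(z)|$. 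By posing the system as an abstract ODE in $\mc(\overline{\Omega_0};\R^d)\times\mc(\overline{\Omega_0};\R^d)$ you obtain the spatial continuity of $(\eta_t,v_t)$ for free from the Banach-space Picard theorem, so Lemma \ref{lem:growth} is not needed for this theorem; the small price is that you must verify that $A_t[\eta]$ is a continuous function of $x$ and that $t\mapsto A_t[\eta]$ is continuous with values in $\mc(\overline{\Omega_0})$, which follows from the uniform continuity of the datum on the compact set $[-\tau,0]\times\overline{\Omega_0}$. Your closing concern about the deteriorating lower bound on the denominator is exactly right and is resolved the same way in the paper: the bound, and hence the Lipschitz constant, is only needed uniformly on each compact window, where the $L^\infty$ control of $v$ confines the characteristics to a fixed ball.
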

\begin{remark}\label{rmk_21}Applying a bootstrapping argument to Theorem \ref{thm:existence} actually yields the global existence of classical solutions to the Lagrangian system \eqref{Lagr1}.
\end{remark}

Our second result deals with the asymptotic behavior of the solutions of \eqref{Lagr1}
for large times. In particular, we prove that under additional assumptions on the 
initial velocity distribution and the influence function,
the system exhibits the so-called \emph{flocking behavior} \cite{CS1, CS2},
where the velocities converge to a common consensus value,
while the mutual distances stay uniformly bounded.
Let us introduce the notation for the spatial and velocity diameters of the solution,
\(   \label{dXdV}
   d_X(t) := \max_{x,y \in \overline{\Omega}_0} |\eta_t(x) - \eta_t(y)|,
   \qquad d_V(t) := \max_{x,y \in \overline{\Omega}_0} |v_t(x) - v_t(y)|.
\)

\begin{theorem}\label{thm:flocking}
Let Assumption \ref{ass:psi} be verified and $(\eta_t, v_t)\in\mc^1([0,\infty); \mc(\overline{\Omega_0}))$ be a solution of the system \eqref{Lagr1}.
Suppose that the initial datum $v_s\in\mc([-\tau,0]\times\overline\Omega_0)$ and the influence function $\wpsi$ satisfy the following conditions:
\(  \label{R_V}
   \max_{s \in [-\tau,0]}\max_{x \in \overline \Omega_0} |v_s(x)| =: R_V < +\infty,
\)
and
\(   \label{iii}
  d_V(0)  + \int_{-\tau}^0 d_V(s)\d s < \int_{d_X(-\tau) + R_V \tau}^\infty \wpsi(s) \ ds,
\)
with $d_X$ and $d_V$ defined in \eqref{dXdV}.
Then the spatial diameter $d_X$ of the solution of \eqref{Lagr1} is uniformly bounded
and the velocity diameter $d_V$ decays exponentially in time,
\(   \label{statement2}
   \sup_{t\geq 0} d_X(t) < +\infty, \qquad
   d_V(t) \leq \left( \max_{s\in[-\tau,0]} d_V(s) \right) e^{-C t} \quad \mbox{for } t \geq 0,
\)
for a suitable constant $C>0$ independent of time.
\end{theorem}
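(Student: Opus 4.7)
The plan is to derive coupled scalar estimates for the diameters $d_X$ and $d_V$, then construct a Lyapunov functional tailored to the delay. The starting point is the a priori bound $\|v_t\|_{L^\infty(\Omega_0)} \leq R_V$ from Theorem \ref{thm:existence}, which gives $|\eta_t(x) - \eta_{t-\tau}(x)| \leq R_V\tau$. The triangle inequality then yields the kernel lower bound $\psi(\eta_t(x) - \eta_{t-\tau}(z)) \geq \wpsi(d_X(t-\tau) + R_V\tau)$ for all $x, z \in \overline{\Omega_0}$, uniformly controlling both the numerator and denominator of the normalized interaction in \eqref{Lagr1}.

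Next I would write the averaged interaction as $\bar v_{t-\tau}(x) = \int w_x(z) v_{t-\tau}(z)\,dz$ with $w_x \geq 0$, $\int w_x = 1$, decompose $w_x = \wpsi(d_X(t-\tau)+R_V\tau)\rho_0 + \hat w_x$ with $\hat w_x \geq 0$, and apply the product-measure symmetrization
\[
\bar v_{t-\tau}(x) - \bar v_{t-\tau}(y) = \iint w_x(z)\,w_y(z')\bigl(v_{t-\tau}(z) - v_{t-\tau}(z')\bigr)\,dz\,dz',
\]
which, after the "diagonal" contribution against $\wpsi^2\rho_0\otimes\rho_0$ cancels, yields the gain $|\bar v_{t-\tau}(x) - \bar v_{t-\tau}(y)| \leq (1 - \wpsi(d_X(t-\tau)+R_V\tau))\,d_V(t-\tau)$. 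An envelope/Danskin argument on the max over $(x,y,\xi)$ then provides the upper Dini estimates
\[
\tfrac{d^+}{dt}\,d_V(t) + d_V(t) \leq \bigl(1 - \wpsi(d_X(t-\tau)+R_V\tau)\bigr)\,d_V(t-\tau), \qquad \tfrac{d^+}{dt}\,d_X(t) \leq d_V(t).
\]

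The heart of the argument is the Lyapunov functional
\[
\mathcal{L}(t) := d_V(t) + \int_{t-\tau}^t d_V(s)\,ds + \int_{d_X(-\tau)+R_V\tau}^{d_X(t-\tau)+R_V\tau} \wpsi(s)\,ds,
\]
whose essential feature is the \emph{retarded} upper endpoint $d_X(t-\tau)+R_V\tau$ in the $\wpsi$-integral. A direct computation combining the two Dini estimates gives $\tfrac{d^+}{dt}\mathcal{L}(t) \leq \wpsi(d_X(t-\tau)+R_V\tau)\bigl(d_X'(t-\tau) - d_V(t-\tau)\bigr) \leq 0$, nonpositivity being available precisely because the retarded matching allows the pointwise bound $d_X'(t-\tau) \leq d_V(t-\tau)$. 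Hence $\mathcal{L}(t) \leq \mathcal{L}(0) = d_V(0) + \int_{-\tau}^0 d_V(s)\,ds$; assumption \eqref{iii} together with strict monotonicity of the incomplete $\wpsi$-integral then confines $d_X(t-\tau)+R_V\tau$ below a finite $D^\star$ for all $t \geq 0$, yielding the first claim of \eqref{statement2}.

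With $\psi_\star := \wpsi(D^\star) > 0$, the velocity estimate becomes the Halanay-type delay inequality $\tfrac{d^+}{dt}d_V(t) + d_V(t) \leq (1-\psi_\star)\,d_V(t-\tau)$. Since $1 > 1-\psi_\star$, the characteristic equation $1 - C = (1-\psi_\star)e^{C\tau}$ admits a unique positive root $C$; a Grönwall-type argument applied to $e^{Ct}d_V(t)$ (or comparison with the associated linear delay equation) then gives $d_V(t) \leq (\max_{s \in [-\tau,0]} d_V(s))\,e^{-Ct}$, completing \eqref{statement2}. The principal obstacle is identifying the correct form of the Lyapunov: the naive choice with the current diameter $d_X(t)$ inside $\wpsi$ would produce $\mathcal{L}'(t) \leq \wpsi(\cdot)(d_X'(t) - d_V(t-\tau))$, whose sign is unclear since $d_X'(t) \leq d_V(t)$ is not controlled by $d_V(t-\tau)$. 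The fix—and the structural reason behind the precise form of \eqref{iii}—is to place the retarded $d_X(t-\tau)+R_V\tau$ inside the $\wpsi$-integral, which is legitimate because the underlying kernel estimate was itself derived with the retarded bound $d_X(t-\tau)+R_V\tau$ rather than $d_X(t)+R_V\tau$.
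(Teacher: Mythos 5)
Your proposal is correct and follows essentially the same route as the paper: the retarded kernel bound via $|\eta_t(x)-\eta_{t-\tau}(z)|\le d_X(t-\tau)+R_V\tau$, the overlap decomposition of the normalized weights yielding the contraction factor $1-\wpsi(d_X(t-\tau)+R_V\tau)$, the Lyapunov functional with the retarded endpoint in the $\wpsi$-integral, and the Halanay-type delay lemma with $1-C=(1-\psi_\star)e^{C\tau}$. The one point where you diverge is technical: you differentiate $d_X$ and $d_V$ directly via an envelope/Danskin argument for upper Dini derivatives, whereas the paper deliberately avoids this (its remark notes that the maximizing pair cannot be tracked in the continuum setting) by introducing integrated majorants $X(t)$ and $V(t)$ with $d_X\le X$, $d_V\le V$ and running the Lyapunov argument on those; your route is legitimate, but the envelope estimate $D^+\max_{x,y}|v_t(x)-v_t(y)|\le\max_{(x,y)\in\mathrm{argmax}}\partial_t|v_t(x)-v_t(y)|$ should be stated and justified rather than merely invoked, since it is precisely the step the authors chose to circumvent.
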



The assumption \eqref{iii} can be understood, for a fixed integrable influence function $\wpsi$,
as a condition for smallness of the delay $\tau$. Indeed, considering a fixed initial datum with
$d_V(s)\equiv: \bar d_V >0$ constant for $s\in [-\tau,0]$ and $d_X(-\tau)\equiv: \bar d_X\geq 0$,
then \eqref{iii} reads
\[
   (1+\tau) \bar d_V < \int_{\bar d_X + R_v \tau}^\infty \wpsi(s)\,ds.
\]
Clearly, the left-hand side increases with increasing $\tau$, while the right-hand side decreases.
So, generically, it is necessary to choose $\tau$ sufficiently small in order to satisfy the flocking condition.
This is often the case in alignment models with delay, see, e.g., \cite{EHS}.
On the other hand, if the influence function $\wpsi$ has a heavy tail, i.e.,
\[
   \int^\infty \wpsi(s)\,d s = +\infty,
\]
then assumption \eqref{iii} is satisfied for any initial datum and any $\tau \geq 0$,
which is a situation usually called \emph{unconditional flocking}, see, e.g., \cite{CS1, CS2}.
Let us note that if the influence function $\wpsi$ is of the commonly used form
\[
   \wpsi(s) = \frac{1}{(1+s^2)^{\beta}},
\]
then unconditional flocking takes place for $\beta \in [0,1/2]$.
In this case the Assumptions \ref{ass:psi} and \eqref{iii} are satisfied.

Our third and final main result is concerned with the existence and uniqueness
of global classical solutions of the system \eqref{Lagr1}.
This result is based on proving sufficient regularity of the solutions constructed in Theorem \ref{thm:existence},
for which we will need the estimates derived in Theorem \ref{thm:flocking}.
Thus the below result adopts the assumptions of Theorem \ref{thm:flocking}.

\begin{theorem}\label{thm_main}
Let Assumption \ref{ass:psi} be verified with some $\ell > \frac{d}2+1$, and let \eqref{R_V} and \eqref{iii} hold. Moreover, we assume that the initial datum satisfying the regularity: 
\[
(\bar\rho_s, \bar u_s) \in \mc([-\tau,0];H^\ell(\Omega_s)) \times \mc([-\tau,0];H^{\ell+1}(\Omega_s)). 
\]
Then the system \eqref{Lagr1} admits a unique global classical solution
$(\eta_t, v_t) \in \mc^1([-\tau,\infty);H^{\ell+1}(\Omega_0)) \times \mc([-\tau,\infty);H^{\ell+1}(\Omega_0))$.
\end{theorem}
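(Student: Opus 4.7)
The plan is to upgrade the continuous global solution $(\eta_t, v_t)$ produced by Theorem \ref{thm:existence} (see also Remark \ref{rmk_21}) to the Sobolev class $H^{\ell+1}(\Omega_0)$ by means of higher-order energy estimates, closing them uniformly in time with the help of the confinement and decay estimates \eqref{statement2}. The choice $\ell > d/2+1$ is tailored so that $H^{\ell+1}(\Omega_0)\hookrightarrow\mc^2(\overline{\Omega_0})$; once $\|\eta_t\|_{H^{\ell+1}}$ is controlled, the map $\eta_t$ is automatically a $\mc^2$-diffeomorphism between $\Omega_0$ and $\Omega_t$, and the mass density given by \eqref{Lagr2} inherits the $\mc^1$-regularity required by Definition \ref{def:classical}.

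First I would perform a local-in-time propagation argument. Applying $\pa_x^\alpha$ with $|\alpha|\leq \ell+1$ to the equation for $v_t$ in \eqref{Lagr1} and writing the forcing as $Q_t(x):=N_t[\eta](x)/D_t[\eta](x)$, I would bound $\|Q_t\|_{H^{\ell+1}}$ via a Fa\`a di Bruno expansion combined with Moser-type composition estimates. The boundedness of the derivatives of $\psi$ up to order $\ell$ from Assumption \ref{ass:psi} lets one control each composition term $\pa^\alpha \psi(\eta_t(x)-\eta_{t-\tau}(y))$ by a polynomial in $\|\eta_t\|_{H^{\ell+1}}$ and $\|\eta_{t-\tau}\|_{H^{\ell+1}}$, provided we have a strictly positive lower bound on $\inf_x D_t[\eta](x)$. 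A standard Picard iteration then yields a unique $H^{\ell+1}$-solution on some interval $[0, T_*)$. To globalize, I would invoke \eqref{statement2}: the uniform bound $\sup_t d_X(t)<+\infty$ together with the positivity and monotonicity of $\wpsi$ gives a strictly positive, time-independent lower bound for $D_t[\eta]$, and rewriting the forcing of the $v_t$-equation as a weighted average of differences $v_{t-\tau}(y)-v_t(x)$ shows that its $L^\infty$-norm is $\lesssim d_V(t)+d_V(t-\tau)$, hence decays exponentially. Plugging these two facts into a Gr\"onwall inequality for the energy $\mathcal{E}_t:=\sum_{|\alpha|\leq\ell+1}(\|\pa^\alpha \eta_t\|_{L^2}^2 + \|\pa^\alpha v_t\|_{L^2}^2)$ prevents finite-time blow-up and extends the solution to all $t\geq 0$; the extension to negative times $s\in[-\tau,0]$ is immediate from the assumed regularity of the initial datum via \eqref{LagrIC}.

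The main obstacle is the $H^{\ell+1}$-control of the nonlocal, nonlinear quotient $Q_t$. The $y$-integration and the fraction interact in a nontrivial way, forcing one to combine Leibniz-type expansions of the ratio $N_t/D_t$ with Moser estimates for compositions of the form $\psi(\eta_t(\cdot)-\eta_{t-\tau}(y))$, and these compositions must be estimated uniformly in the parameter $y\in\overline{\Omega_0}$ before taking the outer $L^2$-norm in $x$. The crucial structural input is the uniform positivity of $D_t[\eta]$: without the flocking hypothesis \eqref{iii} one could not exclude that the weights $\psi(\eta_t(x)-\eta_{t-\tau}(y))$ concentrate on a shrinking set and make $D_t[\eta]$ arbitrarily small, which would destroy the Gr\"onwall closure. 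This is precisely why the hypotheses of Theorem \ref{thm:flocking} are inherited by Theorem \ref{thm_main}.
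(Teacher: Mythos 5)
Your strategy coincides with the paper's: a local $H^{\ell+1}$ existence result (Lemma \ref{lem_local}) is combined with higher-order a priori estimates for the nonlocal quotient, in which the composition $\psi(\eta_t(x)-\eta_{t-\tau}(y))$ is handled by Moser-type inequalities, the denominator is bounded below by $\psi(C_2+R_V\tau)>0$ thanks to the uniform bound on $d_X$ from Theorem \ref{thm:flocking}, and the exponential decay of $d_V$ closes the estimate (Lemmas \ref{lem_useful0}--\ref{prop_apriori}).

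Two steps of your outline need repair before the argument is complete. First, the forcing $\int\phi_t(x,y)(v_{t-\tau}(y)-v_t(x))\rho_0(y)\,\d y$ is \emph{not} bounded in $L^\infty$ by $d_V(t)+d_V(t-\tau)$: the two velocities are evaluated at different times, and $|v_{t-\tau}(x)-v_t(x)|$ is not controlled by any velocity diameter. The quantity that actually decays, and that the energy estimate needs, is the spatial gradient of the nonlocal average; writing it as a double integral with kernel $(\nabla_x\ppsi(x,y))\,\ppsi(x,z)\,(v_{t-\tau}(y)-v_{t-\tau}(z))$ divided by $(\int\ppsi\,\rho_0)^2$ produces a difference of velocities at the \emph{same} time $t-\tau$, hence the factor $d_V(t-\tau)$. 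Second, a plain Gr\"onwall inequality for $\mathcal{E}_t=\sum_{|\alpha|\le\ell+1}(\|\partial^\alpha\eta_t\|_{L^2}^2+\|\partial^\alpha v_t\|_{L^2}^2)$ does not close by itself: the $\eta$-norms are not uniformly bounded (one only gets $\|\nabla^k\eta_t\|_{L^2}\leq C(1+Mt)$), and the derivatives of the quotient are polynomial of degree at least two in these norms, so the right-hand side is superlinear in $\mathcal{E}_t$. The paper's closure is a bootstrap: assume $\|v\|_{L^\infty((-\tau,T);H^{\ell+1})}\leq M$, deduce the linear growth of the $\eta$-norms, and use $e^{-Ct}(1+Mt)^N\leq C_M$ to obtain a $T$-independent bound on $\|v_t\|_{H^{\ell+1}}$, which then continues the local solution. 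Finally, $H^{\ell+1}$-control of $\eta_t$ does not automatically make $\eta_t$ a diffeomorphism; that requires the smallness hypothesis of Theorem \ref{thm:Eulerian} and is not part of (nor needed for) the Lagrangian statement of Theorem \ref{thm_main}.
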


For notational simplicity, we denote by $\|f\|_{L^p}$ the usual $L^p(\Omega_0)$-norm for a function $f(x)$ if there is no confusion, unless otherwise specified.

\begin{remark}
Note that for $\ell > d/2 + 1$ we have the embedding of the Sobolev space $H^\ell(\Omega)$
into the space of continuous functions $\mc^1(\Omega)$. Thus the existence of solutions for the large-time behavior estimate \eqref{statement2} is also justified by Theorem \ref{thm_main}. 
\end{remark}

So far, we established the global regularity of solutions for the Cauchy problem in the Lagrangian coordinates not taking into account the equation \eqref{Lagr2}. In that case, we do not need any smallness assumptions on the initial data, see Theorems \ref{thm:existence} and \ref{thm_main}. However, in order to go back to the Eulerian variables to study the global regularity for the Cauchy problem \eqref{Eul1}-\eqref{Eul2}, the smallness assumption on the initial data is required. In fact, we will show that smooth solutions can be blow up in a finite time when the initial data are not that small, see Theorem \ref{thm_cri} below for details. Note that if the characteristic flow defined in \eqref{eta_flow} is diffeomorphism, i.e., det$\nabla \eta_t > 0$ for all $t \geq 0$, then we can consider the Cauchy problem in the Eulerian coordinates. The theorem below shows that if the initial datum is small enough then the flow $\eta_t$ is indeed a diffeomorphism.



\begin{theorem} \label{thm:Eulerian} Let the same assumptions in Theorem \ref{thm_main} be verified. Moreover, suppose that the initial data $\bar u_s \in \mc([-\tau,0];H^{\ell+1}(\om_s))$ satisfy $\|\nabla \bar u_0\|_{L^2} + \max_{s \in [-\tau,0]}d_V(s) \leq \e$ for sufficiently small $\e > 0$. Then we have the global existence and uniqueness of classical solutions to the system \eqref{Eul1}-\eqref{Eul2}. 
\end{theorem}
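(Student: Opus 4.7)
The plan is to bootstrap from the Lagrangian existence theorem \ref{thm_main} to the Eulerian level by showing that the characteristic flow $\eta_t$ given by \eqref{eta_flow} remains a \emph{global} diffeomorphism. Once $\det\nabla\eta_t$ is bounded above and uniformly away from zero for all $t\ge 0$, the equivalence between \eqref{Lagr1}--\eqref{Lagr2} and \eqref{Eul1}--\eqref{Eul2} recorded after \eqref{LagrIC} produces the required Eulerian classical solution via \eqref{hv}. The task therefore reduces to an \emph{a priori} bound on $\nabla\eta_t$ in terms of the Lagrangian quantities already controlled by Theorems \ref{thm:existence}--\ref{thm_main}.

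First I would work out the variational equation along the flow. Setting $A_t(x):=\nabla\eta_t(x)$ and $B_t(x):=\nabla v_t(x)$ and differentiating \eqref{Lagr1} in $x$ yields the \emph{linear} (in $A_t,B_t$) system
\[
   \dot A_t = B_t, \qquad \dot B_t + B_t = G_t(x)\,A_t, \qquad A_0=\mathrm{Id},\quad B_0=\nabla\bar u_0,
\]
where $G_t(x)$ is the spatial gradient, evaluated at $y=\eta_t(x)$, of the normalized delayed average on the right-hand side of \eqref{Lagr1}. Crucially, in Lagrangian coordinates no Riccati-type self-interaction in $\nabla u$ appears: all the nonlinearity is hidden inside $G_t$, where the gradient only falls on $\psi$. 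Rewriting the two terms produced by differentiating the quotient as weighted means of differences $v_{t-\tau}(z)-v_{t-\tau}(z')$ yields the pointwise bound
\[
   |G_t(x)| \lesssim d_V(t-\tau),
\]
with a constant depending on $\|\nabla\psi\|_{L^\infty}$ and on the uniform positive lower bound of the denominator $\int_{\Omega_0}\psi(\eta_t(x)-\eta_{t-\tau}(z))\rho_0(z)\,\d z$, both furnished by the uniform boundedness of $d_X$ proved in Theorem \ref{thm:flocking}.

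To close the estimate I would combine this with the exponential flocking decay $d_V(t)\lesssim\e\,e^{-Ct}$ delivered by Theorem \ref{thm:flocking} under the hypothesis $\max_s d_V(s)\le\e$, and with the initial smallness $\|\nabla\bar u_0\|_{L^2}\le\e$. Energy estimates for the linear variational system above, using the integrable-in-time forcing $\|G_t\|_{L^\infty}\lesssim\e e^{-Ct}$ together with the linear damping coming from $\dot B+B$, propagate smallness of $B_t$ to all times in the form $\|B_t\|_{L^2}\lesssim\e\,e^{-\alpha t}$ for some $\alpha>0$, so that $\|A_t-\mathrm{Id}\|_{L^2}\le\int_0^t\|B_s\|_{L^2}\,\d s\lesssim\e$. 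Pointwise control is then obtained by running the same argument at the $H^\ell$ level for $\ell>d/2+1$, using the $H^{\ell+1}$ regularity furnished by Theorem \ref{thm_main}, and concluding via the Sobolev embedding $H^{\ell+1}\hookrightarrow\mc^1$. Thus $A_t$ stays uniformly invertible, $\det\nabla\eta_t$ remains between two positive constants, and the Lagrangian solution transfers to an Eulerian classical solution.

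The main difficulty, and the reason the smallness hypothesis is imposed, lies precisely in this last closure: one has to rule out that the small but persistent forcing $G_t$ accumulates into a finite-time degeneracy of $A_t$. The joint smallness $\|\nabla\bar u_0\|_{L^2}+\max_s d_V(s)\le\e$ is exactly what makes the linear variational system contractive in the relevant norm; without it $\det\nabla\eta_t$ can collapse in finite time, which is the critical phenomenon addressed in Theorem \ref{thm_cri}.
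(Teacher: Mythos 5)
Your overall strategy coincides with the paper's: reduce the theorem to showing that $\det\nabla\eta_t$ stays positive, write $\nabla_x\eta_t=\mathbb{I}+\int_0^t\nabla_x v_s\,\d s$, and control the integral through decay of $\nabla v_t$. Your variational system is set up correctly (in Lagrangian variables there is indeed no Riccati self-interaction, and the forcing is bounded by $C\,d_V(t-\tau)\,|\nabla\eta_t(x)|$ thanks to the uniform lower bound $\psi_m$ on the denominator), and your $L^2$-level conclusion $\|\nabla v_t\|_{L^2}\lesssim\e e^{-\alpha t}$ is exactly the paper's Lemma \ref{lem_decay2}.

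The gap is in the upgrade from $L^2$ to pointwise control. You propose to ``run the same argument at the $H^\ell$ level'' and then use $H^{\ell+1}\hookrightarrow\mc^1$. But the hypothesis only makes $\|\nabla\bar u_0\|_{L^2}$ and $\max_s d_V(s)$ small; the higher derivatives $\|\nabla^k\bar u_0\|_{L^2}$, $k\ge 2$, are finite but \emph{not} small. So rerunning the decay estimate in $H^\ell$ cannot yield $\|\nabla v_t\|_{H^\ell}\lesssim\e$, and a mere uniform (non-small) $H^{\ell+1}$ bound fed into the Sobolev embedding gives only a bounded, not small, $\|\nabla v_t\|_{L^\infty}$ --- insufficient to keep $\det\nabla\eta_t$ away from zero. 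The paper closes this by Gagliardo--Nirenberg interpolation,
\[
\|\nabla_x v_t\|_{L^\infty}\leq C\|\nabla_x v_t\|_{L^2}^{1-\beta}\,\|\nabla_x v_t\|_{H^{s+1}}^{\beta}\leq C\e^{1-\beta}e^{-bt},
\]
combining the small, exponentially decaying $L^2$ norm with the merely bounded high norm from Lemma \ref{prop_apriori}; this makes $\int_0^\infty\|\nabla_x v_s\|_{L^\infty}\,\d s$ small and yields $\det\nabla\eta_t\ge 1-C\e-\mathcal{O}(\e^2)>0$. With that interpolation step inserted, your argument matches the paper's proof.
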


Finally, we show that the spatially one-dimensional
version of the system \eqref{Eul1}--\eqref{EulIC} exhibits a critical threshold
in terms of the derivative of the initial datum $\bar u_0$. 
In particular, if $\partial_x \bar u_0(x)$ is negative enough
for some $x\in\Omega_0$, then the corresponding solution
blows up in finite time. This is due to the fact that $\eta_t$
ceases to be a diffeomorphism. The critical threshold phenomena for flocking models are studied in \cite{CCTT, TT}.

\begin{theorem}\label{thm_cri}
Consider the system \eqref{Eul1}--\eqref{Eul2} with $d=1$. Let Assumption \ref{ass:psi} be verified with some $\ell \geq 1$. Moreover, we assume that the influence function $\psi$ satisfies $|\psi'| \leq C|\psi|$ for some positive constant $C$. Let $\overline C = 2CR_V$ with $R_V$ appeared in \eqref{R_V}.
\begin{itemize}
\item
If $\overline C \leq 1$ and $\partial_x u_0(x) \geq - \left(1 + \sqrt{1 - 4\overline C}\right)/2$ for all $x \in \R$,
then system has a global classical solution.
\item
If there exists an $x\in\R$ such that $\pa_x u_0(x) < -\left( 1 + \sqrt{1 + \overline C}\right)/2$,
then the solution blows up in a finite time.
\end{itemize}
\end{theorem}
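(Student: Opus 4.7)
The plan is to reduce the one-dimensional Eulerian system to a scalar Riccati-type ODE along characteristics. From the momentum equation $\partial_t u + u\,\partial_x u = G - u$, where $G(t,x)$ denotes the delayed normalized alignment term on the right-hand side of \eqref{Eul2}, differentiating in $x$ and evaluating along the characteristic flow $\eta_t$ gives
\[
\frac{\d w}{\d t} = -w^2 - w + \partial_x G(t,\eta_t(x)),
\qquad w(t) := \partial_x u_t(\eta_t(x)).
\]
Meanwhile $J(t) := \partial_x\eta_t(x)$ satisfies $\dot J = wJ$, so $J(t) = \exp\bigl(\int_0^t w(s)\,\d s\bigr)$; thus $\eta_t$ is a diffeomorphism exactly as long as $w$ stays finite, and a finite-time divergence $w\to-\infty$ forces $J\to 0$, which is blow-up of the Eulerian density, while global boundedness of $w$ yields a global classical Eulerian solution in combination with Theorem \ref{thm_main}.

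The forcing $\partial_x G$ is controlled by rewriting
\[
\partial_x G(t,x) = \frac{\int_\R \psi'(x-y)\rho_{t-\tau}(y)\bigl(u_{t-\tau}(y) - G(t,x)\bigr)\,\d y}{\int_\R \psi(x-y)\rho_{t-\tau}(y)\,\d y},
\]
and applying the hypothesis $|\psi'|\leq C|\psi|$ together with the maximum principle $\|u_t\|_{L^\infty}\leq R_V$ (inherited from Theorem \ref{thm:existence} via $u_t(\eta_t(x)) = v_t(x)$) to obtain $|\partial_x G|\leq 2CR_V = \overline C$. Hence $w$ satisfies the two-sided differential inequality
\[
-w^2 - w - \overline C \;\leq\; \dot w \;\leq\; -w^2 - w + \overline C.
\]

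For the global existence part, I compare the lower bound with the autonomous ODE $\dot y = -y^2 - y - \overline C$, whose smaller real fixed point is $-(1+\sqrt{1-4\overline C})/2$: the right-hand side vanishes there and is strictly positive just above it, so the half-line $\{w\geq -(1+\sqrt{1-4\overline C})/2\}$ is forward invariant, and the scalar comparison principle propagates the hypothesis on $\partial_x u_0$ to $w(t)$ for all $t\geq 0$. A symmetric comparison with $\dot y = -y^2-y+\overline C$ bounds $w$ from above, so $J$ stays uniformly bounded and bounded away from zero, $\eta_t$ remains a global diffeomorphism, and Theorem \ref{thm_main} transfers this to a global classical Eulerian solution. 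For the blow-up part, the upper inequality $\dot w \leq -w^2-w+\overline C$ has a strictly negative right-hand side once $w$ lies below the smaller root of $-y^2-y+\overline C=0$, and for $|w|$ large it behaves like $-w^2$, which by elementary integration drives $w(t)\to-\infty$ at some finite $t^\ast>0$ and forces $J(t^\ast)=0$, ruling out any classical continuation.

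The principal technical obstacle is matching the two sharp thresholds stated in the theorem. The crude estimate $|\partial_x G|\leq\overline C$ reproduces the global-existence constant $-(1+\sqrt{1-4\overline C})/2$ exactly, but applied to the upper inequality it yields a blow-up threshold of $-(1+\sqrt{1+4\overline C})/2$ rather than the stated $-(1+\sqrt{1+\overline C})/2$. Recovering the stated blow-up constant apparently requires a sharper one-sided bound on $\partial_x G$, plausibly obtained by exploiting the antisymmetry of $\psi'(x-\cdot)$ about $y=x$ together with the fact that $G(t,x)$ is a convex combination of the values of $u_{t-\tau}$; once the refined pointwise bound is in hand, the rest is a routine scalar Riccati comparison.
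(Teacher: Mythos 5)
Your argument is exactly the paper's: the same Riccati reduction $D_t w_t + w_t^2 + w_t = \partial_x G$ along characteristics, the same bound $|\partial_x G|\leq 2CR_V=\overline C$ obtained from $|\psi'|\leq C|\psi|$ and $\|u_t\|_{L^\infty}\leq R_V$, and the same two comparison ODEs with roots $\frac{-1\pm\sqrt{1-4\overline C}}{2}$ and $\frac{-1\pm\sqrt{1+4\overline C}}{2}$. The "obstacle" you flag at the end is not a gap in your proof but a typo in the theorem statement: the paper's own proof also produces the blow-up threshold $w^{2,-}=-\bigl(1+\sqrt{1+4\overline C}\bigr)/2$, not $-\bigl(1+\sqrt{1+\overline C}\bigr)/2$, so no sharper one-sided bound on $\partial_x G$ is needed or used. (Likewise the hypothesis $\overline C\leq 1$ in the first bullet should read $4\overline C\leq 1$, as the proof assumes and as the reality of $\sqrt{1-4\overline C}$ requires.) Your explicit treatment of $J(t)=\partial_x\eta_t(x)$ and its link to loss of the diffeomorphism property is a welcome elaboration of a point the paper only asserts.
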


\begin{remark}It is easy to check that the influence function given in \eqref{psi_cs} satisfies $|\psi'| \leq \beta|\psi|$.
\end{remark}
\begin{remark}The condition $|\psi'| \leq C|\psi|$ can be replaced by the assumption for the large-time behavior estimate \eqref{R_V} and \eqref{iii}. In fact, in that case, the constant $\overline C$ is given by
\[
\overline C = \frac{R_V \Norm{\psi'}_{L^\infty(0,\infty)}}{\psi(d_X^M)} \left( 1 + \frac{1}{\psi(d_X^M)} \right),
\] 
where $d_X^M = \sup_{t\geq -\tau} d_X(t)$. 
\end{remark}

\begin{convention}
In the rest of the paper, generic, not necessarily equal, constants will be denoted by $C$.
\end{convention}

%
%
\section{Existence of solutions for the Lagrangian system - proof of Theorem \ref{thm:existence}} \label{sec:existence}
We start by proving the following technical Lemma.

\begin{lemma}\label{lem:growth}
Let $u=u(t)$ be a nonnegative, continuous and piecewise $\mc^1$-function
satisfying the inequality
\(   \label{growth_ineq}
   \tot{}{t} u(t) \leq C_1 + C_2 \int_0^t u(s) \d s\qquad\mbox{for almost all } t > 0,
\)
with some constants $C_1, C_2 > 0$.
Then
\(  \label{growth_claim}
   u(t) \leq \left(u(0) + \frac{C_1}{\sqrt{C_2}} \right) e^{\sqrt{C_2}\, t} \qquad\mbox{for all } t > 0.
\)
\end{lemma}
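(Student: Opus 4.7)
The plan is to reduce the integro-differential inequality \eqref{growth_ineq} to a standard first-order linear differential inequality by introducing a clever auxiliary function that mixes $u$ and its integral in a way that matches the square root of $C_2$.

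More precisely, I would set
\[
   V(t) := u(t) + \sqrt{C_2}\int_0^t u(s)\,\d s,
\]
which is nonnegative (since $u\geq 0$) and piecewise $\mc^1$. Differentiating and using \eqref{growth_ineq},
\[
   \tot{V}{t}(t) = \tot{u}{t}(t) + \sqrt{C_2}\, u(t)
      \leq C_1 + C_2\int_0^t u(s)\,\d s + \sqrt{C_2}\, u(t)
      = C_1 + \sqrt{C_2}\, V(t),
\]
so $V$ satisfies the linear differential inequality $V' \leq C_1 + \sqrt{C_2}\, V$ with $V(0) = u(0)$. The point of choosing the prefactor $\sqrt{C_2}$ is precisely to make the terms $C_2\int_0^t u$ and $\sqrt{C_2}\, u$ combine into $\sqrt{C_2}\, V$.

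Next I would apply the standard integrating-factor/Gronwall argument: multiplying by $e^{-\sqrt{C_2}\, t}$ gives $(e^{-\sqrt{C_2}\,t} V(t))' \leq C_1 e^{-\sqrt{C_2}\,t}$, and integrating on $[0,t]$ yields
\[
   V(t) \leq u(0)\, e^{\sqrt{C_2}\, t} + \frac{C_1}{\sqrt{C_2}}\bigl(e^{\sqrt{C_2}\, t} - 1\bigr)
   \leq \left(u(0) + \frac{C_1}{\sqrt{C_2}}\right) e^{\sqrt{C_2}\, t}.
\]
Finally, since $u \geq 0$ implies $u(t) \leq V(t)$ pointwise, the claim \eqref{growth_claim} follows directly.

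There is no real obstacle here beyond spotting the correct auxiliary variable: the only subtlety is the choice of the weight $\sqrt{C_2}$, which is dictated by dimensional balancing of the two constants $C_1$ and $C_2$ that appear on the right-hand side of \eqref{growth_ineq}. The piecewise $\mc^1$ regularity causes no trouble since the integrating factor argument still goes through at points of differentiability and $V$ is absolutely continuous.
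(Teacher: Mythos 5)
Your proof is correct, and it takes a genuinely different route from the paper's. The paper first integrates \eqref{growth_ineq} over $(0,t)$ to get $u(t) \leq u(0) + C_1 t + C_2\int_0^t\int_0^s u(r)\,\d r\,\d s$ and then runs a continuation argument: it defines $T$ as the supremum of times up to which \eqref{growth_claim} holds, assumes $T<+\infty$, feeds the claimed bound back into the double integral, and derives the contradiction $0 \leq -C_1/\sqrt{C_2} - T\sqrt{C_2}\,u(0)$. You instead linearize the problem outright: the auxiliary function $V(t) = u(t) + \sqrt{C_2}\int_0^t u(s)\,\d s$ satisfies $V'(t) \leq C_1 + \sqrt{C_2}\,V(t)$ with $V(0)=u(0)$, and the standard integrating-factor computation together with $u \leq V$ (which is where the nonnegativity of $u$ enters) gives the claim with exactly the same constant. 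Your argument is shorter, avoids the bootstrap/contradiction machinery, and makes transparent why $\sqrt{C_2}$ is the correct exponential rate; the paper's version has the minor advantage of not requiring one to guess the auxiliary quantity, since the ansatz is simply verified a posteriori. The one regularity point --- that integrating the a.e.\ inequality $(e^{-\sqrt{C_2}\,t}V(t))' \leq C_1 e^{-\sqrt{C_2}\,t}$ is legitimate --- you address correctly: $V$ is continuous and piecewise $\mc^1$, hence absolutely continuous on compact intervals, so the fundamental theorem of calculus applies.
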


\begin{proof}
We integrate \eqref{growth_ineq} on $(0,t)$,
\(  \label{growth_int}
   u(t) \leq u(0) + C_1 t + C_2 \int_0^t \int_0^s u(r) \d r\d s.
\)
Let us denote
\[
   T := \sup \{ t>0;\, \eqref{growth_claim} \mbox{ holds on } [0,t] \}.
\]
Since $C_1>0$ and due to the continuity of $u=u(t)$, we have $T>0$.
For contradiction, assume that $T<+\infty$.
Then, obviously, $u(T) = \left(u(0) + \frac{C_1}{\sqrt{C_2}} \right) e^{\sqrt{C_2}\, T}$,
and inserting this into \eqref{growth_int} gives
\[
   \left(u(0) + \frac{C_1}{\sqrt{C_2}} \right) e^{\sqrt{C_2}\, T}
   &\leq&
   u(0) + C_1 T + C_2 \int_0^T \int_0^s u(r) \d r\d s   \\
   &\leq&
   u(0) + C_1 T + \left(u(0) + \frac{C_1}{\sqrt{C_2}} \right) \left( e^{\sqrt{C_2}\, T} - 1- T\sqrt{C_2} \right).
\]
This further implies
\[
   0 \leq - \frac{C_1}{\sqrt{C_2}} - T \sqrt{C_2} u(0),
\]
a contradiction to the assumption $u(0) \geq 0$ and the positivity of $C_1$ and $C_2$.
\end{proof}

Before proceeding with the proof of Theorem \ref{thm:existence},
let us make a remark about the structure of the system \eqref{Lagr1}.
Although the equation for $v_t$ formally is an integro-differential equation,
observe that only $v_{t-\tau}(y)$ appears in the integrand on its right-hand side,
while $\eta_t$ appears as $\eta_t(x)$ 
(while the integration is performed with respect to $y$).
Consequently, employing the method of steps, see, e.g., \cite{Smith},
we shall prove the existence of solutions inductively on the intervals
$[0,\tau]$, $[\tau, 2\tau]$, etc. On each of these intervals, \eqref{Lagr1}
is merely a family of ordinary differential equations for $(\eta_t, v_t)$,
parametrized by $x\in\Omega_0$, with $v_{t-\tau}$ taken from the previous step.
Therefore, we can employ the classical Cauchy-Lipschitz theorem for proving local in time existence
of solutions for each fixed $x\in\Omega_0$.
A suitable a priori estimate, uniform in $t$ and $x$, will then provide the global in time existence.

\begin{proof}[Proof of Theorem \ref{thm:existence}]
We proceed inductively on time intervals of length $\tau > 0$.
For a prescribed $(\eta_s,v_s)\in\mc([-\tau,0]\times\overline\Omega_0)$ and for $t\in(0,\tau)$, we denote
\[
    F_t[\eta] := \int_{\Omega_0} \psi(\eta - \eta_{t-\tau}(y)) \rho_0(y)v_{t-\tau}(y)\d y,\qquad
    G_t[\eta] := \int_{\Omega_0} \psi|\eta - \eta_{t-\tau}(y)) \rho_0(y)\d y.
\]
Then, the system \eqref{Lagr1} is written as
\(
   \frac{\d \eta_t(x)}{\d t} &=& v_t(x),  \label{ODEs1}\\
   \frac{\d v_t(x)}{\d t} &=& \frac{F_t[\eta_t(x)]}{G_t[\eta_t(x)]} - v_t(x),  \label{ODEs2}
\)
which is a family of ODE systems on $(0,t)$, parametrized by $x\in\Omega_0$, subject to the initial datum
$\eta_0(x) = x$ and $v_0(x)$ given by the value of $v_s(x)$ at $s=0$.
For any fixed $x\in\Omega_0$, we will show local in time existence of solutions \eqref{ODEs1}--\eqref{ODEs2} employing the classical
Cauchy-Lipschitz theorem.

We only need to prove that the expression $\frac{F_t[\eta]}{G_t[\eta]}$ is locally Lipschitz-continuous in $\eta$,
uniformly in $t\in[0,\tau]$. For simplicity, we will omit the explicit notation of time dependence in $F[\cdot]$ and $G[\cdot]$.
For any $\eta^1, \eta^2\in B_R^d$, where $B_R$ is the ball of radius $R>0$ in $\R^d$, we have
\[
   \left| \frac{F[\eta^1]}{G[\eta^1]} - \frac{F[\eta^2]}{G[\eta^2]} \right| \leq
      \left| \frac{F[\eta^1]-F[\eta^2]}{G[\eta^1]}\right| + |F[\eta^2]| \left| \frac{G[\eta^1]-G[\eta^2]}{G[\eta^1]G[\eta^2]} \right|.
\]
Since for $i=1,2$, $|\eta^i-\eta_{t-\tau}| \leq R + \max_{s\in[-\tau,0]} \Norm{\eta_s}_{L^\infty(\Omega_0)} < +\infty$, and due to the monotonicity
of the influence function $\psi$, we have
\[
   \psi(\eta^i - \eta_{t-\tau}(y)) \geq \psi(R + \max_{s\in[-\tau,0]} \Norm{\eta_s}_{L^\infty(\Omega_0)}) =: \psi_R > 0.
\]
Therefore, $G[\eta^i] \geq \psi_R$.
Due to the assumption $0 <\psi \leq 1$, we have the bound
\[
   |F[\eta^2]| \leq  \max_{s\in[-\tau,0]} \Norm{v_s}_{L^\infty(\Omega_0)} < +\infty.
\]
Moreover,
\[
   |F[\eta^1]-F[\eta^2]| \leq L_\psi |\eta^1-\eta^2| \max_{s\in[-\tau,0]} \Norm{v_s}_{L^\infty(\Omega_0)},
\]
where $L_\psi$ is the Lipschitz constant of the influence function $\psi$; a similar estimate obviously holds for $|G[\eta^1]-G[\eta^2]|$.
Note that the Lipschitz continuity of $\psi$ follows from Assumption \ref{ass:psi}.
Putting the above estimates together, we conclude that there exists a constant $C_R$, independent of $t\in(0,\tau)$,
such that
\[
   \left| \frac{F[\eta^1]}{G[\eta^1]} - \frac{F[\eta^2]}{G[\eta^2]} \right| \leq C_R |\eta^1-\eta^2|.
\]
Consequently, the Cauchy-Lipschitz theorem provides the existence of a unique solution $(\eta_t(x), v_t(x))$
of the system \eqref{ODEs1}--\eqref{ODEs2} 
on the time interval $(0,T_x)$ for some $0 < T_x < \tau$, for all $x\in\Omega_0$.

Next, still for a fixed but arbitrary $x\in\Omega_0$, we derive an a-priori bound on $(\eta_t(x), v_t(x))$.
We multiply \eqref{ODEs2} by $v_t(x)$,
\(
   \frac12 \frac{\d |v_t(x)|^2}{\d t} &=& \frac{F_t[\eta_t(x)]}{G_t[\eta_t(x)]}\cdot v_t(x) - |v_t(x)|^2  \nonumber \\
      &\leq& |v_t| \max_{s\in[-\tau,0]} \Norm{v_s}_{L^\infty(\Omega_0)} - |v_t(x)|^2,  \label{est_v}
\)
where we used the trivial inequality
\[
   \left| \frac{F_t[\eta_t(x)]}{G_t[\eta_t(x)]} \right| \leq \Norm{v_{t-\tau}}_{L^\infty(\Omega_0)}
      \leq \max_{s\in[-\tau,0]} \Norm{v_s}_{L^\infty(\Omega_0)}.
\]
Clearly, \eqref{est_v} implies that
\[
   |v_t(x)| \leq \max_{s\in[-\tau,0]} \Norm{v_s}_{L^\infty(\Omega_0)} \qquad\mbox{for } t\in(0,T_x),
\]
and from \eqref{ODEs1} we have
\[
   |\eta_t(x)| &\leq& |x| + T_x \max_{s\in[-\tau,0]} \Norm{v_s}_{L^\infty(\Omega_0)} \\
    &\leq& \max_{y\in\Omega_0} |y| + \tau \max_{s\in[-\tau,0]} \Norm{v_s}_{L^\infty(\Omega_0)}\qquad\mbox{for } t\in(0,T_x).
\]
Consequently, the solution constructed above is uniformly bounded on $[0,T_x]$
and can be extended to the whole interval $[0,\tau]$.
Since the bound is independent of $x\in\Omega_0$, we have
\[
   \max_{t\in[0,\tau]} \Norm{v_t}_{L^\infty(\Omega_0)} \leq \max_{s\in[-\tau,0]} \Norm{v_s}_{L^\infty(\Omega_0)},
\]
and we can re-iterate the above procedure to construct a family (parametrized by $x\in\Omega_0$)
of unique, global in time solutions of the system \eqref{ODEs1}--\eqref{ODEs2},
satisfying \eqref{global_v}.

Finally, we show that the continuity of the initial datum is propagated in time.
Let us fix $\eps>0$ and $x, z\in\Omega_0$ such that $|x-z|\leq\eps$.
Then $|\eta_0(x)-\eta_0(z)| = |x-z| \leq\eps$ and there exists a $\delta>0$
such that $|v_0(x)-v_0(z)| \leq \delta$.
An easy calculation gives
\(   \label{est1}
   |\eta_t(x)-\eta_t(z)| \leq |x-z| + \int_0^t |v_s(x)-v_s(z)| \d s,
\)
and, for almost all $t\in[0,T]$,
\(   \label{est2}
    \frac{\d}{\d t} |v_t(x)-v_t(z)| \leq  \left| \frac{F_t[\eta_t(x)]}{G_t[\eta_t(x)]} - \frac{F_t[\eta_t(z)]}{G_t[\eta_t](z)} \right|  - |v_t(x)-v_t(z)|.
\)
By a similar procedure as above we conclude that there exists a constant $C_1>0$, depending on $L_\psi$, the initial datum and $T>0$,
such that
\[
   \left| \frac{F_t[\eta_t(x)]}{G_t[\eta_t(x)]} - \frac{F_t[\eta_t(z)]}{G_t[\eta_t](z)} \right| \leq C_1 |\eta_t(x)-\eta_t(z)|
      \qquad \mbox{for all } t\in[0,T].
\]
Inserting into \eqref{est2} gives
\[
    \frac{\d}{\d t} |v_t(x)-v_t(z)| \leq C_1 |x-z| + C_1 \int_0^t |v_s(x)-v_s(z)| \d s   - |v_t(x)-v_t(z)|
\]
for almost all $t\in[0,T]$. Lemma \ref{lem:growth} implies then
\[
   |v_t(x)-v_t(z)| \leq \left( |v_0(x)-v_0(z)| + |x-z| \right) e^{\sqrt{C_1} t} \leq (\delta + \eps) e^{\sqrt{C_1}T}.
\]
Continuity in time is proved analogously, using again the Lipschitz continuity of $\psi$.
Consequently, $v_t \in \mc([0,T] \times \Omega_0)$ for any $T>0$.
Continuous differentiability in time and continuity in space of $\eta_t$ on $[0,T] \times \Omega_0$ follows directly from \eqref{est1}.
\end{proof}

%
%
\section{Large time behavior - proof of Theorem \ref{thm:flocking}} \label{sec:flocking}
In this Section we derive asymptotic estimates describing the large-time
behavior of the solutions $(\eta_t, v_t)$ of the system \eqref{Lagr1}, \eqref{LagrIC}, which can be constructed by Theorem \ref{thm:existence}.
For this whole Section we adopt the assumptions of Theorem \ref{thm:flocking} and Assumption \ref{ass:psi} with $\ell = 0$,
in particular, the validity of the formulae \eqref{R_V} and \eqref{iii}.

\begin{lemma}\label{lem_spt}
Let $(\eta_t, v_t)\in\mc^1([0,\infty); \mc(\overline{\Omega_0}))$ be a solution of the system \eqref{Lagr1}. Then we have
\bq\label{est_spt}
   \max_{x \in \overline\Omega_0} |v_t(x)| \leq R_V \qquad \mbox{for } t\geq 0,
\eq
with $R_V$ defined in \eqref{R_V}.
\end{lemma}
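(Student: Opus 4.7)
The plan is to observe that \eqref{est_spt} is essentially the pointwise reformulation of the $L^\infty$-bound \eqref{global_v} already obtained in Theorem \ref{thm:existence}, so the argument I would give is a condensed version of the a-priori estimate appearing in that proof. The crucial structural point is that the first term on the right-hand side of the equation for $v_t(x)$, namely $F_t[\eta_t(x)]/G_t[\eta_t(x)]$, is a weighted average of the delayed velocities $v_{t-\tau}(y)$ with nonnegative normalized weights $\psi(\eta_t(x)-\eta_{t-\tau}(y))\rho_0(y)$; since the weights integrate to one, this quantity is a convex combination and therefore
\[
   \left| \frac{F_t[\eta_t(x)]}{G_t[\eta_t(x)]} \right| \leq \max_{y\in\overline{\Omega}_0} |v_{t-\tau}(y)|
\]
for every $x\in\overline{\Omega}_0$ and every $t\geq 0$. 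This contractive property of the normalized averaging is what drives the whole estimate.

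Next, for each fixed $x\in\overline{\Omega}_0$ I would take the inner product of the $v_t$-equation in \eqref{Lagr1} with $v_t(x)$ and bound the resulting cross term by Cauchy--Schwarz, exactly as in \eqref{est_v}, to arrive at
\[
   \tot{}{t} |v_t(x)| \leq \max_{y\in\overline{\Omega}_0} |v_{t-\tau}(y)| - |v_t(x)|
\]
at points where $|v_t(x)| > 0$ (and the inequality is trivial where $|v_t(x)|=0$). Then I would run the classical \emph{method of steps} on intervals $[n\tau,(n+1)\tau]$ for $n\in\N\cup\{0\}$. On the base interval $[0,\tau]$ the delayed term satisfies $\max_y |v_{s-\tau}(y)|\leq R_V$ by definition of $R_V$ in \eqref{R_V}; integrating the differential inequality via Duhamel/Gr\"onwall yields
\[
   |v_t(x)| \leq e^{-t}|v_0(x)| + (1-e^{-t})R_V \leq R_V \quad\text{for all } t\in[0,\tau].
\]
Inductively, if the bound $\max_{y}|v_s(y)|\leq R_V$ holds on $[-\tau,n\tau]$, the same Duhamel argument applied on $[n\tau,(n+1)\tau]$ propagates it to $[n\tau,(n+1)\tau]$, and taking the supremum in $x$ yields \eqref{est_spt} for all $t\geq 0$.

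The argument contains no genuine obstacle: all ingredients (the convex-combination structure, the Cauchy--Schwarz estimate, the inductive step) already appear in Section \ref{sec:existence}. The one mildly delicate point to articulate carefully is the non-smoothness of $t\mapsto |v_t(x)|$ at points where $v_t(x)$ vanishes, which is handled either by working with $|v_t(x)|^2$ throughout (and dividing by $|v_t(x)|$ on the set where it is positive) or, equivalently, by using a Dini-derivative formulation; I would follow the former route since it mirrors \eqref{est_v} verbatim.
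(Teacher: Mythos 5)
Your proof is correct, and it reaches the same core differential inequality as the paper --- namely $\tot{}{t}|v_t(x)| \leq \max_{y}|v_{t-\tau}(y)| - |v_t(x)|$, obtained from the convex-combination structure of the normalized average and the $|v_t|^2$-computation --- but it closes the argument differently. You use the method of steps: on each interval $[n\tau,(n+1)\tau]$ the delayed term is already controlled by $R_V$ from the previous step, and Duhamel gives $|v_t(x)| \leq e^{-(t-n\tau)}|v_{n\tau}(x)| + (1-e^{-(t-n\tau)})R_V \leq R_V$. The paper instead runs a continuity/contradiction argument: it inflates the bound to $R_V^\e = R_V+\e$, defines $T_*^\e$ as the maximal time up to which $\max_x|v_t(x)| < R_V^\e$, shows via the same Duhamel estimate that the limit at $T_*^\e$ is at most $R_V^\e - \e e^{-T_*^\e}$, contradicting maximality, and then lets $\e\to 0$. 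The two routes buy slightly different things: yours is more elementary and mirrors the induction already used in the existence proof of Theorem \ref{thm:existence}, but it genuinely requires $\tau>0$ (the intervals $[n\tau,(n+1)\tau]$ degenerate at $\tau=0$), whereas the paper's continuation argument treats $\tau\geq 0$ uniformly --- which matters since the authors explicitly want their flocking and regularity results to cover the undelayed case. If you wanted your version to cover $\tau=0$ as well you would have to append precisely the kind of continuation argument the paper uses. Your handling of the non-differentiability of $|v_t(x)|$ at its zeros is the same as the paper's and is fine.
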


\begin{proof}
We fix an $\e > 0$, set $R_V^\e := R_V + \e$ and
\[
   \mathcal{A}^\e := \lt\{t > 0:\, \max_{x \in \overline \Om_0}|v_s(x)| < R^\e_V \quad \mbox{for } s \in [0,t)  \rt\}.
\]
Then, by assumption \eqref{R_V} and the continuity of the solution, we have
$\mathcal{A}^\e \neq \emptyset$ and $T^\e_* := \sup \mathcal{A}^\e > 0$.
For a contradiction, let us assume that $T^\e_* < +\infty$. Then we have
\(  \label{limit}
   \lim_{t \to T^\e_* -}\, \max_{x \in \overline \Om_0} |v_t(x)| = R_V^\e.
\)
On the other hand, for $t < T^\e_*$ we calculate
$$\begin{aligned}
\frac12 \frac{\d}{\d t}|v_t(x)|^2 &\leq
\frac{\int_{\Om_0} \psi(\eta_t(x) - \eta_{t-\tau}(y))|v_{t-\tau}(y)|\rho_0(y)\d y}{\int_{\Om_0} \psi(\eta_t(x) - \eta_{t-\tau}(y))\rho_0(y)\d y} |v_t(x)|- |v_t(x)|^2\cr
&\leq \max_{y \in \overline \Om_0}|v_{t-\tau}(y)||v_t(x)| - |v_t(x)|^2\cr
&\leq R^\e_V|v_t(x)| - |v_t(x)|^2.
\end{aligned}$$
Consequently,
\[
   \frac{\d}{\d t}|v_t(x)| \leq R^\e_V - |v_t(x)| \qquad \mbox{for almost all } t\in (0,T^\e_*),
\]
which further implies
\[
   |v_t(x)| \leq (|v_0(x)| - R^\e_V)e^{-t} + R^\e_V \qquad \mbox{for } t\in (0,T^\e_*).
\]
Thus, we have
\[
   \lim_{t \to T^\e_* -}\, \max_{x \in \overline \Om_0} |v_t(x)| \leq
      (\max_{x \in \overline \Om_0}|v_0(x)| - R^\e_V)e^{-T^\e_*} + R^\e_V \leq  - \eps e^{-T^\e_*} + R_V^\e,
\]
which is a contradiction to \eqref{limit}.
Hence we have $T^\e_* = +\infty$, and by taking the limit $\e \to 0$ we conclude \eqref{est_spt}.
\end{proof}

For $t\geq 0$ we define the quantities
\(
   X(t) &:=& d_X(0) + \int_0^t d_V(s) \d s, \label{X} \\
   V(t) &:=& d_V(0) e^{-t} + \int_0^t [1 - \psi(X(s-\tau) + R_V \tau)] d_V(s-\tau) e^{s-t} \d s, \label{V}
\)
with $d_X$ and $d_V$ defined in \eqref{dXdV} and $R_V$ defined in \eqref{R_V}.
Moreover, for $t\in[-\tau,0]$ we set
$$X(t):= d_X(t),\qquad V(t):=d_V(t),$$
so that both $X(t)$ and $V(t)$ are continuous on $[-\tau,\infty)$.

\begin{lemma}\label{main_prop}
Let $(\eta_t, v_t)\in\mc^1([0,\infty); \mc(\overline{\Omega_0}))$ be a solution of the system \eqref{Lagr1}. Then, for all $t>0$ we have
\(
\frac{\d}{\d t} X(t) &=& d_V(t),  \nonumber \\
\frac{\d}{\d t} V(t) &\leq& -V(t) + [1 - \psi(X(t-\tau) + R_V \tau)]V(t-\tau).  \label{ddtV}
\)
\end{lemma}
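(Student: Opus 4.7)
The plan is to handle the two claims separately. The first identity $\frac{d}{dt}X(t)=d_V(t)$ is immediate from the definition \eqref{X} by the fundamental theorem of calculus applied to the continuous integrand $d_V$. For the second claim, I would differentiate $V(t)$ directly from \eqref{V}; a routine computation yields the \emph{equality}
\[
\frac{d}{dt}V(t) = -V(t) + \bigl[1-\psi(X(t-\tau)+R_V\tau)\bigr]\,d_V(t-\tau).
\]
Since $1-\psi(\cdot) \geq 0$, the lemma's inequality therefore reduces to proving the monotone comparison $d_V(s) \leq V(s)$ for all $s \geq -\tau$. For $s \in [-\tau,0]$ this is the defining equality, so it remains to establish it for $s>0$.

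The comparison is derived in two auxiliary steps. First, from $\frac{d}{dt}\eta_t=v_t$ and the triangle inequality, for any $x,z \in \overline{\Omega_0}$ one has $|\eta_t(x)-\eta_t(z)| \leq |x-z| + \int_0^t |v_s(x)-v_s(z)|\,ds$; taking the supremum over $x,z$ yields $d_X(t) \leq X(t)$. Second, the equation for $v_t$ in \eqref{Lagr1} written as $\frac{d}{dt}v_t(x) = U_t(x) - v_t(x)$ with $U_t(\cdot) := F_t[\cdot]/G_t[\cdot]$ gives, by variation of constants,
\[
|v_t(x)-v_t(z)| \leq |v_0(x)-v_0(z)|\,e^{-t} + \int_0^t |U_s(x)-U_s(z)|\,e^{s-t}\,ds.
\]
The crux of the argument is a Motsch--Tadmor-type estimate
\[
|U_s(x)-U_s(z)| \leq \bigl(1-\psi(d_X(s-\tau)+R_V\tau)\bigr)\,d_V(s-\tau),
\]
which I view as the main obstacle. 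To see it, note that $U_s(x)$ is a $\rho_0$-weighted convex combination of $\{v_{s-\tau}(y)\}_{y \in \Omega_0}$ with normalized weights $w_x(y) := \psi(\eta_s(x)-\eta_{s-\tau}(y))/G_s[\eta_s(x)]$, and analogously $w_z$ for $z$. Decomposing $w_x = m + \tilde{w}_x$ and $w_z = m + \tilde{w}_z$ with $m(y) := \min(w_x(y),w_z(y))$ and $M := \int m\,\rho_0\,dy$, the common mass cancels and one obtains $U_s(x)-U_s(z) = (1-M)(\bar v_1 - \bar v_2)$ for two convex combinations $\bar v_1,\bar v_2$ of $\{v_{s-\tau}(y)\}$, so $|U_s(x)-U_s(z)| \leq (1-M)\,d_V(s-\tau)$. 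The lower bound $M \geq \psi(d_X(s-\tau)+R_V\tau)$ then follows because $G_s[\eta_s(x)], G_s[\eta_s(z)] \leq 1$ (since $\psi \leq 1$ and $\|\rho_0\|_{L^1}=1$), which gives $m(y) \geq \min(\psi(\eta_s(x)-\eta_{s-\tau}(y)), \psi(\eta_s(z)-\eta_{s-\tau}(y)))$, and from the bound $|\eta_s(x)-\eta_{s-\tau}(y)| \leq |\eta_s(x)-\eta_{s-\tau}(x)| + |\eta_{s-\tau}(x)-\eta_{s-\tau}(y)| \leq R_V\tau + d_X(s-\tau)$ together with the monotonicity of $\wpsi$ supplied by Lemma \ref{lem_spt} and Assumption \ref{ass:psi}.

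Once this estimate is in hand, the proof concludes easily: I substitute it into the variation-of-constants bound, use $d_X(s-\tau) \leq X(s-\tau)$ and the monotonicity of $\wpsi$ to replace $d_X$ by $X$ inside $\psi$, and take the supremum over $x,z \in \overline{\Omega_0}$. The right-hand side is precisely $V(t)$ as defined by \eqref{V}, so $d_V(t) \leq V(t)$ for all $t \geq 0$. Feeding this back into the equality derived for $\frac{d}{dt}V(t)$ and using the nonnegativity of $1-\psi(X(t-\tau)+R_V\tau)$ delivers the claimed differential inequality \eqref{ddtV}.
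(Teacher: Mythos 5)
Your proposal is correct and follows essentially the same route as the paper: the same reduction to the comparison $d_V(t)\leq V(t)$, the same Motsch--Tadmor-type decomposition of the two normalized weights via their pointwise minimum (the paper's $\wt\phi_t$ and $\Phi_t$), the same lower bound $\Phi_t\geq\psi(d_X(t-\tau)+R_V\tau)$ from $|\eta_t(x)-\eta_{t-\tau}(y)|\leq d_X(t-\tau)+R_V\tau$, and the same variation-of-constants integration followed by $d_X\leq X$ and the monotonicity of $\wpsi$. The only cosmetic difference is that the paper phrases the velocity estimate as a differential inequality for $\tfrac12\tfrac{\d}{\d t}|v_t(x)-v_t(y)|^2$ before integrating, which is equivalent to your direct Duhamel bound.
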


\begin{proof}
While the first claim follows directly from \eqref{X}, for the second we take the time derivative in \eqref{V},
\[
   \frac{\d}{\d t} V(t) = -V(t) + [1 - \psi(X(t-\tau) + R_V \tau)] \frac{\d}{\d t} X(t-\tau).
\]
Then, due to the inequality $\frac{\d}{\d t} X(t) \leq d_V(t)$ for all $t > -\tau$ and
the assumption $0 \leq \psi(s) \leq 1$ for $s\in [0,+\infty)$, we need to prove that
\(  \label{dVV}
   d_V(t) \leq V(t) \qquad\mbox{for } t\geq -\tau.
\)
For $x$, $z\in\Omega_0$ and $t\geq 0$ set
\[
   \phi_t(x,z) := \frac{\psi(\eta_t(x) - \eta_{t-\tau}(z))}{\int_{\Omega_0} \psi(\eta_t(x) - \eta_{t-\tau}(y))\rho_0(y)\d y}.
\]
Then, for any $x$, $y\in\Omega_0$ and $t>0$, we calculate
$$\begin{aligned}
\frac12\frac{\d}{\d t}|v_t(x) - v_t(y)|^2 &= \lt(v_t(x) - v_t(y) \rt) \cdot \lt(\pa_t v_t(x) - \pa_t v_t(y) \rt)\cr
&= \lt(v_t(x) - v_t(y) \rt) \cdot \int_{\Om_0} \lt(\phi_t(x,z) - \phi_t(y,z) \rt) v_{t-\tau}(z)\rho_0(z)\d z\cr
&\quad - |v_t(x) - v_t(y)|^2\cr
&\leq |v_t(x) - v_t(y)| \lt|\int_{\Om_0} \lt(\phi_t(x,z) - \phi_t(y,z)\rt) v_{t-\tau}(z)\rho_0(z)\d z \rt| \cr
&\quad - |v_t(x) - v_t(y)|^2.
\end{aligned}$$
We denote 
\[
  \wt \phi_t(x,y;z) := \min \lt\{ \phi_t(x,z), \phi_t(y,z)\rt\} \qquad \mbox{and} \qquad
  \Phi_t(x,y) := \int_{\Omega_0} \wt\phi_t(x,y;z)\rho_0(z)\d z.
\]
Then, by definition, $0 \leq \psi_t(x,y) \leq 1$ for all $x$, $y\in\Omega_0$ and $t\geq 0$.
Consequently,
\[
   \frac{\phi_t(x,z) - \wt\phi_t(x,y;z)}{1 - \Phi_t(x,y)} \geq 0 \qquad \mbox{and} \qquad 
   \int_{\Omega_0} \lt(\frac{\phi_t(x,z) - \wt\phi_t(x,y;z)}{1 - \Phi_t(x,y)} \rt) \rho_0(z)\d z = 1,
\]
which further implies
\[
   \int_{\Omega_0} \lt(\frac{\phi_t(x,z) - \wt\phi_t(x,y;z)}{1 - \Phi_t(x,y)} \rt) v_{t-\tau}(z)\rho_0(z)\d z \in
   \mbox{conv} \lt\{v_{t-\tau}(z),\, z \in \Omega_0 \rt\},
\]
where conv $\mathcal{S}$ denotes the convex hull of the set $\mathcal{S}$.
Therefore,
$$\begin{aligned}
&\lt|\int_{\Omega_0} \lt(\phi_t(x,z) - \phi_t(y,z)\rt) v_{t-\tau}(z)\rho_0(z)\d z \rt| \cr
&\quad \leq \lt(1 - \Phi_t(x,y) \rt) \Bigg|\int_{\Omega_0} \lt(\frac{\phi_t(x,z) - \wt\phi_t(x,y;z)}{1 - \Phi_t(x,y)} \rt) v_{t-\tau}(z)\rho_0(z)\d z \cr
&\qquad \qquad \qquad \qquad  \qquad  \qquad - \int_{\Omega_0}
   \lt(\frac{\phi_t(y,z) - \wt\phi_t(x,y;z)}{1 - \Phi_t(x,y)} \rt) v_{t-\tau}(z)\rho_0(z)\d z\Bigg|\cr
&\quad \leq \lt(1 - \Phi_t(x,y) \rt) d_V(t-\tau).
\end{aligned}$$
On the other hand, it follows from \eqref{eta_flow} and Lemma \ref{lem_spt} that
\[
   |\eta_t(x)-\eta_{t-\tau}(z)| = \lt|\eta_{t-\tau}(x) - \eta_{t-\tau}(z) - \int_{t-\tau}^t \frac{\d}{\d s}\eta_s(x)\d s \rt|
   \leq \lt|\eta_{t-\tau}(x) - \eta_{t-\tau}(z)\rt| + R_V\tau,
\]
and with \eqref{dXdV} we have
\[  \label{ineq_eta_xz}
   |\eta_t(x)-\eta_{t-\tau}(z)| \leq d_X(t-\tau) + R_V \tau.
\]
Due to the assumption $0 \leq \psi(s) \leq 1$ for $s\in [0,+\infty)$, we have
\[
  \int_{\Omega_0} \psi(\eta_t(x) - \eta_{t-\tau}(y))\rho_0(y)\d y \leq 1,
\]
and since $\psi$ is a nonincreasing function,
\[
   \phi_t(x,z) \geq \psi(\eta_t(x) - \eta_{t-\tau}(z)) \geq \psi(d_X(t-\tau) + R_V\tau)
\]
for all $x, z\in\Omega_0$.
This implies
\[
   \Phi_t(x,y) \geq \psi(d_X(t-\tau) + R_V\tau)
\]
for all $x, y\in\Omega_0$.
Combining the above estimates, we arrive at
\[
   \frac12\frac{\d}{\d t}|v_t(x) - v_t(y)|^2 \leq \bigl([1 - \psi(d_X(t-\tau) + R_V \tau)]d_V(t-\tau) - |v_t(x) - v_t(y)| \bigr) |v_t(x) - v_t(y)|.
\]
We divide by $|v_t(x) - v_t(y)|$ and integrate in time, which gives
\[
   |v_t(x) - v_t(y)| \leq |v_0(x) - v_0(y)|e^{-t} + \int_0^t \left[(1 - \psi(d_X(s-\tau) + R_V \tau)\right]d_V(s-\tau) e^{s-t} \d s,
\]
and taking the maximum over $x$, $y\in\overline\Omega_0$,
\[
   d_V(t) \leq d_V(0) e^{-t} + \int_0^t \left[(1 - \psi(d_X(s-\tau) + R_V \tau)\right]d_V(s-\tau) e^{s-t} \d s.
\]
Since, as can be easily proven, $d_X(t) \leq X(t)$,
the monotonicity property of the influence function $\psi$ finally implies
\[
   d_V(t) \leq d_V(0) e^{-t} + \int_0^t \left[(1 - \psi(X(s-\tau) + R_V \tau)\right]d_V(s-\tau) e^{s-t} \d s = V(t).
\]
\end{proof}

We next recall a Gronwall-type estimate for time-delayed differential inequalities whose proof can be found in \cite[Lemma 2.4]{Choi-Haskovec}.
\begin{lemma}\label{lem_gron}
Let $u$ be a nonnegative, continuous and piecewise $\mc^1$-function satisfying,
for some constant $0 < a < 1$, the differential inequality
\[
   \frac{\d}{\d t} u(t) \leq (1-a) u(t -\tau) - u(t) \qquad\mbox{for almost all } t>0.
\]
Then there exists a constant $0 < C < 1$ satisfying the equation
\[
  1 - C = (1-a)e^{C\tau},
\]
such that the estimate holds
\[
   u(t) \leq \left( \max_{s\in[-\tau,0]} u(s) \right) e^{-Ct} \qquad \mbox{for all } t \geq 0.
\]
\end{lemma}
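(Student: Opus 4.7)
My plan is to prove the estimate via a comparison argument combined with the method of steps. The key algebraic observation is that the explicit profile $\bar u(t) := M e^{-Ct}$, with $M := \max_{s\in[-\tau,0]} u(s)$, satisfies the delayed equation \emph{with equality},
\[
   \bar u'(t) = (1-a)\bar u(t-\tau) - \bar u(t),
\]
precisely because $(1-a)e^{C\tau} = 1-C$. Moreover, on the initial interval $[-\tau,0]$ one has $u(s) \leq M \leq M e^{-Cs} = \bar u(s)$ (since $-Cs \geq 0$), so the task reduces to propagating this comparison forward in time.

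As a preliminary step I would verify that a root $C \in (0,1)$ of $1 - C = (1-a)e^{C\tau}$ exists and is unique. Setting $g(C) := 1 - C - (1-a) e^{C\tau}$, one has $g(0) = a > 0$ and $g(1) = -(1-a)e^\tau < 0$; strict monotonicity of $g$ together with the intermediate value theorem then produces the required unique $C \in (0,1)$.

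The comparison itself would proceed by induction on the intervals $[k\tau,(k+1)\tau]$, $k = 0, 1, 2, \ldots$. Assuming inductively that $u(t) \leq M e^{-Ct}$ on $[-\tau, k\tau]$, for $t \in [k\tau, (k+1)\tau]$ the shifted argument $t-\tau$ lies in the previous interval, so the hypothesis combined with the delayed differential inequality yields $u'(t) \leq (1-a) M e^{-C(t-\tau)} - u(t)$. Multiplying by the integrating factor $e^t$ and invoking the characteristic relation $(1-a)e^{C\tau} = 1-C$ gives
\[
   \frac{\d}{\d t}\bigl(u(t) e^t\bigr) \leq M(1-C) e^{(1-C)t}.
\]
Integrating from $k\tau$ to $t$ and applying the induction hypothesis at the endpoint $t = k\tau$ produces a telescoping cancellation that leaves exactly $u(t) \leq M e^{-Ct}$, closing the induction step.

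The only technical subtlety is that the differential inequality is assumed to hold merely almost everywhere, since $u$ is only piecewise $\mc^1$. However, a continuous and piecewise $\mc^1$ function is absolutely continuous on bounded intervals, so the fundamental theorem of calculus justifies all the integrations above. Thus the main difficulty is notational rather than conceptual; the heart of the argument is simply the fact that $\bar u(t) = M e^{-Ct}$ is the extremal solution to the delayed equation with equality, and the elementary computation showing that the bound is preserved from one step to the next.
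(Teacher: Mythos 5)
Your proof is correct and complete. The paper itself does not prove this lemma --- it simply recalls it from \cite[Lemma 2.4]{Choi-Haskovec} --- so you are supplying an argument the authors omit. All the key points check out: $g(C)=1-C-(1-a)e^{C\tau}$ is strictly decreasing with $g(0)=a>0$ and $g(1)<0$, so the root $C\in(0,1)$ exists and is unique; the profile $Me^{-Ct}$ solves the delayed equation with equality precisely because of the characteristic relation; the initial comparison on $[-\tau,0]$ uses $e^{-Cs}\geq 1$ there; and the integrating-factor computation on $[k\tau,(k+1)\tau]$ does telescope to $u(t)e^{t}\leq Me^{(1-C)t}$ once you insert the inductive bound at $t=k\tau$. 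The absolute-continuity remark disposes of the ``almost everywhere'' caveat. Stylistically, your method of steps differs from the continuation-by-contradiction arguments that this paper favors for its other Gronwall-type estimates (compare the proofs of Lemmas \ref{lem:growth} and \ref{lem_spt}, which define the supremum of the set of times where the bound holds and derive a contradiction at the putative exit time); the cited reference proves the delayed version in that spirit as well. Your comparison-principle route is arguably cleaner here because the candidate bound is an exact solution of the comparison equation, which makes the step-to-step propagation a pure computation with no $\e$-perturbation needed.
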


We are now ready to prove the main result of this section.

\begin{lemma}\label{prop_lt}
Let $(\eta_t, v_t)\in\mc^1([0,\infty); \mc(\overline{\Omega_0}))$ be a solution of the system \eqref{Lagr1}. Assume that \eqref{R_V} and \eqref{iii} are verified.
Then we have
\[
d_V(t) \leq \left( \max_{s\in[-\tau,0]} d_V(s) \right) e^{-C_1t} \quad \mbox{for all } t>0,
   \quad \mbox{and} \quad \sup_{t>0} d_X(t) < C_2,
\]
where $C_1$, $C_2$ are positive constants independent of $t$.
\end{lemma}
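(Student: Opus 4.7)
The plan is to combine the two pieces already available: the coupled delayed-differential inequalities of Lemma \ref{main_prop} for $(X,V)$, and the scalar Gronwall-type estimate of Lemma \ref{lem_gron}. The former, via a Lyapunov functional, will yield a uniform-in-time bound on $X$; once that bound is at hand, the second inequality in \eqref{ddtV} collapses to the hypothesis of Lemma \ref{lem_gron} and delivers exponential decay of $V$, and hence of $d_V$ through the inequality $d_V \leq V$ already established inside the proof of Lemma \ref{main_prop}.

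Concretely, I would introduce
\begin{align*}
\Lyap(t) := V(t) + \int_{d_X(-\tau) + R_V \tau}^{X(t-\tau) + R_V \tau} \wpsi(s)\,\d s, \qquad t \geq 0,
\end{align*}
with the conventions $X(s) = d_X(s)$, $V(s) = d_V(s)$ on $[-\tau,0]$. Since $X'(s) \leq d_V(s) \leq V(s)$ holds throughout $[-\tau,\infty)$ (the identity $X'(s) = d_V(s)$ for $s \geq 0$ and the envelope bound $d_X'(s) \leq d_V(s)$ on $[-\tau,0]$), and $\wpsi \geq 0$, differentiating $\Lyap$ and inserting \eqref{ddtV} yields, after cancellation of the $\wpsi(X(t-\tau)+R_V\tau) V(t-\tau)$ contributions,
\begin{align*}
\tot{}{t}\Lyap(t) \leq V(t-\tau) - V(t).
\end{align*}

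Integrating on $[0,T]$ and observing that the shifted and unshifted integrals of $V$ cancel up to boundary contributions over $[-\tau,0]$ and $[T-\tau,T]$, I obtain
\begin{align*}
\Lyap(T) \leq \Lyap(0) + \int_{-\tau}^0 d_V(s)\,\d s = d_V(0) + \int_{-\tau}^0 d_V(s)\,\d s.
\end{align*}
By assumption \eqref{iii}, the right-hand side is strictly smaller than $\int_{d_X(-\tau)+R_V\tau}^{\infty}\wpsi(s)\,\d s$, so there is a finite $X^{\max} > d_X(-\tau)$ solving
\begin{align*}
\int_{d_X(-\tau)+R_V\tau}^{X^{\max} + R_V\tau} \wpsi(s)\,\d s = d_V(0) + \int_{-\tau}^0 d_V(s)\,\d s.
\end{align*}
Since $\wpsi > 0$, the integral term in $\Lyap(T)$ is strictly increasing in $X(T-\tau)$, and a short case distinction on whether $X(T-\tau) \gtreqless d_X(-\tau)$ yields $X(t) \leq X^{\max}$ for all $t \geq -\tau$; combined with $d_X(t) \leq X(t)$, this already proves the second assertion with $C_2 := X^{\max}$.

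With this bound in hand, set $\psi^{\ast} := \wpsi(X^{\max} + R_V\tau) > 0$. The inequality \eqref{ddtV} reduces to the scalar delayed differential inequality $V'(t) \leq (1-\psi^{\ast}) V(t-\tau) - V(t)$, to which Lemma \ref{lem_gron} applies with $a := \psi^{\ast}$, producing a constant $C_1 \in (0,1)$ for which $V(t) \leq \bigl(\max_{s\in[-\tau,0]} d_V(s)\bigr) e^{-C_1 t}$; the bound $d_V \leq V$ then closes the argument. The only delicate point I anticipate is the bookkeeping on the initial segment $t - \tau \in [-\tau,0]$, where $X$ is prescribed by the initial data and need not be monotone; but because the Lyapunov integrand is a monotone function of $X(t-\tau)$ anchored at $d_X(-\tau)$, this amounts to careful case-tracking rather than a substantive obstacle.
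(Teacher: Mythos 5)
Your argument is correct and follows essentially the same route as the paper: the same Lyapunov functional built from $V$ plus the integral of $\wpsi$ over the range of $X$, the same use of \eqref{iii} to produce the finite upper limit $d_*$ (your $X^{\max}+R_V\tau$), and the same final application of Lemma \ref{lem_gron} together with $d_V\leq V$. The only cosmetic difference is that the paper also includes $\int_{t-\tau}^t V(s)\,\d s$ in the functional so that $\tot{}{t}\Lyap\leq 0$ pointwise, whereas you keep the residual $V(t-\tau)-V(t)$ and absorb it by a change of variables after integrating; the two bookkeepings are equivalent.
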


\begin{proof}
We introduce the following Lyapunov functional for $t \in (0,T]$,
\[
   \Lyap(t) := V(t) + \int_{X(- \tau) + R_v \tau}^{X(t - \tau) + R_v \tau} \psi(s)\,\d s + \int_{t-\tau}^t V(s)\,\d s.
\]
Using Lemma \ref{main_prop}, we obtain that for almost all $t \in (0,T]$,
$$
\begin{aligned}
   \tot{}{t}\Lyap(t) &= \tot{}{t} V(t) + \psi(X(t - \tau) + R_v \tau) \tot{}{t} X(t - \tau) + V(t) - V(t - \tau)\cr
   &\leq - V(t) + \lt[1 - \psi(X(t-\tau) + R_v \tau)\rt] V(t - \tau) \cr 
   &\quad + \psi(X(t - \tau) + R_v \tau) d_V(t-  \tau) + V(t) - V(t - \tau)\cr
   &= 0,
\end{aligned}
$$
where we used the inequality $d_V(t) \leq V(t)$ for $t\geq -\tau$, \eqref{dVV} from the proof of Lemma \ref{main_prop}.
Integrating over the time interval $(0,t)$ yields
\(  \label{zwischenstep}
   V(t) + \int_{X(- \tau) + R_v \tau}^{X(t - \tau) + R_v \tau} \psi(s)\,\d s + \int_{t-\tau}^t V(s)\,\d s \leq V(0)  + \int_{-\tau}^0 V(s)\,\d s.
\)
On the other hand, assumption \eqref{iii} implies that there exists a $d_* > 0$ such that
\[
   d_V(0)  + \int_{-\tau}^0 d_V(s)\,\d s = \int_{d_X(-\tau) + R_v \tau}^{d_*}\psi(s)\,\d s.
\]
Since, by definition, $V(t) = d_V(t)$ for $t\in[-\tau,0]$, we have
\[
   d_V(0)  + \int_{-\tau}^0 d_V(s)\,\d s = V(0)  + \int_{-\tau}^0 V(s)\,\d s.
\]
This together with \eqref{zwischenstep} implies
\[
   \int_{X(- \tau) + R_v \tau}^{X(t - \tau) + R_v \tau} \psi(s)\,\d s \leq \int_{d_X(-\tau) + R_v \tau}^{d_*}\psi(s)\,\d s,
\]
and, since $X(- \tau) = d_X(-\tau)$,
\[
    0  \leq \int_{X(t -\tau) + R_v \tau}^{d_*} \psi(s)\,\d s.
\]
With the inequality $d_X(t) \leq X(t)$ for $t\geq -\tau$, this implies
\[
   d_X(t -\tau) + R_v \tau \leq X(t -\tau) + R_v \tau \leq d_* \quad \mbox{for }  t > 0.
\]
Using this in \eqref{ddtV}, we arrive at
\[
   \tot{}{t} V(t) \leq -V(t) + (1 - \psi_*) V(t - \tau)
\]
for all $t>0$, where $\psi_* := \psi(d_*)$.
We finally apply Lemma \ref{lem_gron} and the inequality \eqref{dVV} to complete the proof.
\end{proof}

\begin{remark}
In the above proof we proceeded along the lines of \cite{Choi-Haskovec},
where a similar statement has been proved for the discrete setting.
However, in our setting the proof becomes slightly more involved.
Indeed, in the discrete setting
the time axis can be divided into an at most countable
system of disjoint intervals $[t_k, t_{k+1})$
such that the velocity diameter $d_V$ is realized
by a fixed pair of particles on this time interval,
and one can calculate the time derivative of $d_V$ there.
This is obviously not possible in the continuum setting.
Consequently, we had to introduce the functions $X$ and $V$ in \eqref{X}--\eqref{V}
and estimate their time derivatives in Lemma \ref{main_prop}.
This is the main difference with respect to the approach taken in \cite{Choi-Haskovec}.
\end{remark}

%
%
\section{Existence of solutions for the Eulerian system - proof of Theorem \ref{thm:Eulerian}}\label{sec:classical}
In this section we prove the global-in-time existence and uniqueness of classical solutions of the system \eqref{Eul1}--\eqref{Eul2}. We first shortly establish the existence of local-in-time solutions in Lemma \ref{lem_local} and then derive suitable a-priori estimates that allow us to establish a global-in-time result.

\begin{lemma}\label{lem_local}
Let Assumption \ref{ass:psi} be verified with some $\ell > \frac{d}2+1$. Moreover, we assume that the initial datum satisfying the regularity: 
\[
(\bar\rho_s, \bar u_s) \in \mc([-\tau,0];H^\ell(\Omega_s)) \times \mc([-\tau,0];H^{\ell+1}(\Omega_s)).
\]
Then, there exists a $T > 0$ such that the system \eqref{Lagr1} has a unique classical solution $(\eta_t, v_t) \in \mc^1([-\tau,T];H^{\ell+1}(\Om_0)) \times \mc([-\tau,T];H^{\ell+1}(\Om_0))$.
\end{lemma}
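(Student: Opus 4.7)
The plan is to construct the local solution by a Banach fixed-point argument applied to the integrated form of \eqref{Lagr1}, working in a ball of $\mc([0,T];H^{\ell+1}(\Omega_0))^2$ for sufficiently small $T>0$. Since $\tau > 0$, the method of steps reduces matters on $[0,\tau]$ to a genuine initial-value problem in which the delayed quantities $\eta_{t-\tau}$ and $v_{t-\tau}$ are inherited from the prescribed initial data; local existence on successive intervals $[k\tau,(k+1)\tau]$ then follows by the same argument, using the solution from the previous interval as the new history.

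First I would rewrite \eqref{Lagr1} in integrated form,
\[
   \eta_t(x) = x + \int_0^t v_s(x)\,\d s, \qquad
   v_t(x) = \bar u_0(x) + \int_0^t \bigl(A_s[\eta_s](x) - v_s(x)\bigr)\,\d s,
\]
where $A_s[\eta](x) := F_s[\eta(x)]/G_s[\eta(x)]$ is the normalized alignment operator from Section \ref{sec:existence}, and define the Picard map $\mt(\eta,v) := (\tilde\eta,\tilde v)$ accordingly. The fixed point is sought in the closed ball
\[
   B_{R,T} := \Bigl\{(\eta,v) \in \mc([0,T];H^{\ell+1}(\Om_0))^2 :\ \sup_{t\in[0,T]} \bigl(\|\eta_t - \mathrm{Id}\|_{H^{\ell+1}} + \|v_t - \bar u_0\|_{H^{\ell+1}}\bigr) \leq R \Bigr\},
\]
for $R,T$ to be determined.

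The key analytical step is to establish tame estimates on $A_s[\eta]$: (i) a uniform positive lower bound $G_s[\eta](x) \geq \wpsi(M) > 0$, coming from the monotonicity and positivity of $\wpsi$ together with the $L^\infty$-bound on $|\eta_s(x) - \eta_{s-\tau}(y)|$ derived via \eqref{global_v} in Theorem \ref{thm:existence}; and (ii) a Sobolev bound
\[
   \|A_s[\eta]\|_{H^{\ell+1}(\Om_0)} \leq C\bigl(1+\|\eta\|_{H^{\ell+1}}^{\ell+1}\bigr),
\]
with $C$ depending on $M$, $\|\bar\rho_{s-\tau}\|_{H^\ell}$ and $\|\bar u_{s-\tau}\|_{H^{\ell+1}}$, together with an analogous Lipschitz-type estimate on $A_s[\eta^1] - A_s[\eta^2]$ in $H^{\ell+1}$. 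These follow from Moser-type product and quotient inequalities and a Fa\`a di Bruno chain rule for $\psi\circ(\eta(\cdot) - \eta_{s-\tau}(y))$, all valid thanks to the Sobolev embedding $H^{\ell+1}(\Om_0) \hookrightarrow \mc^1(\Om_0)$ (from $\ell > d/2 + 1$). With these in hand, standard arguments show that $\mt$ sends $B_{R,T}$ into itself and is a strict contraction provided $T$ is small enough in terms of $R$ and the initial data, yielding the unique fixed point $(\eta_t, v_t)$. The $\mc^1$-in-time regularity of $\eta_t$ is then immediate from $\tot{\eta_t}{t} = v_t \in \mc([0,T];H^{\ell+1})$, while continuity in time of $v_t$ follows by inserting the fixed point back into the integrated equation.

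The main obstacle is the composition estimate (ii). Applying Fa\`a di Bruno to $\partial_x^\alpha[\psi(\eta(x) - \eta_{s-\tau}(y))]$ with $|\alpha| = \ell + 1$ formally produces a term of the form $(D^{\ell+1}\psi)(\eta(x) - \eta_{s-\tau}(y)) \cdot (D\eta(x))^{\otimes(\ell+1)}$, whereas Assumption \ref{ass:psi} only guarantees boundedness of the first $\ell$ derivatives of $\psi$. To close the estimate one exploits the integral structure: this highest-order term sits inside $\int\cdots\rho_0(y)v_{s-\tau}(y)\,\d y$, so one derivative can be redistributed via a change of variables $z = \eta_{s-\tau}(y)$ (the map $\eta_{s-\tau}$ being a $\mc^1$-diffeomorphism by the history regularity $\bar u_{s-\tau}\in H^{\ell+1}$) followed by integration by parts, shifting one derivative from $\psi$ onto $\rho_0$ or $v_{s-\tau}$; the intermediate mixed terms are controlled by Gagliardo-Nirenberg interpolation between $L^\infty$ and $H^{\ell+1}$. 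Iterating the whole construction over $[k\tau,(k+1)\tau]$, with the freshly constructed piece serving as history for the next step, produces the asserted solution on $[-\tau,T]$.
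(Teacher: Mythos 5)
The paper does not actually prove this lemma: its ``proof'' is a one-sentence referral to \cite{CCZ}, so your fixed-point construction is supplying an argument the authors chose to omit rather than competing with one of theirs. The route you take --- method of steps to freeze the delayed quantities on $[k\tau,(k+1)\tau]$, then Picard iteration for the resulting non-delayed system in a ball of $\mc([0,T];H^{\ell+1}(\Omega_0))^2$, with a uniform lower bound on $G$ and Moser/chain-rule estimates on the normalized alignment term --- is the standard one and is sound. One preliminary point you pass over: before the iteration can start you must convert the Eulerian history $(\bar\rho_s,\bar u_s)$ on $[-\tau,0]$ into Lagrangian history, i.e.\ solve \eqref{eta_flow} backwards and verify $\eta_s\in H^{\ell+1}$ and $v_s=\bar u_s\circ\eta_s\in H^{\ell+1}$; this is routine (composition with a $\mc^1$-diffeomorphism, using $H^{\ell+1}\hookrightarrow\mc^1$) but it is where the hypothesis on $\bar u_s$ actually enters, so it should be stated.

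The derivative-counting issue you flag is genuine --- and, notably, it also lurks unaddressed in the paper's own Lemma \ref{lem_useful0}, which invokes Lemma \ref{lem_sob}(iii) with $k=\ell+1$ and hence tacitly uses $\ell+1$ bounded derivatives of $\psi$ although Assumption \ref{ass:psi} grants only $\ell$. Your transfer of one derivative onto the data via $z=\eta_{s-\tau}(y)$ followed by integration by parts is legitimate (the boundary terms vanish since $\rho$ vanishes on $\partial\Omega_{s-\tau}$, and $\rho_{s-\tau}u_{s-\tau}\in H^1$), and it repairs the self-mapping bound. But note that the \emph{contraction} estimate --- Lipschitz dependence of $A_s[\eta]$ on $\eta$ in $H^{\ell+1}$ --- formally costs yet one more derivative, since one must difference $(D^{\ell+1}\psi)(\eta^1-\cdot)-(D^{\ell+1}\psi)(\eta^2-\cdot)$. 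You must either apply the same transfer trick to the difference --- which does work here, because on each step $A_s[\eta](x)=a_s(\eta(x))$ for a fixed kernel $a_s$ whose $\xi$-derivatives up to order $\ell+2$ are controlled after moving two derivatives onto $\rho_{s-\tau}u_{s-\tau}\in H^{\ell}$ with $\ell\geq 2$ --- or, more conventionally, prove the contraction in a lower-order norm such as $\mc([0,T];L^2)$ while only propagating boundedness in $H^{\ell+1}$. With either repair the argument closes; as written, the contraction claim is the one step that is not yet justified.
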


\begin{proof}
Even though we are dealing with the effect of time delay in the alignment force, the existence and uniqueness of local-in-time classical solutions can be obtained by using a similar argument as in \cite[Appendix A]{CCZ}, in which the one-dimensional pressureless Euler-Poisson equations are considered. Thus we skip it here.
\end{proof}

For the global regularity, we need to use the estimate of time behavior studied in Section \ref{sec:flocking} which plays a important role in constructing the global-in-time classical solutions. Similar idea are used in \cite{CK16, HKK15} to prevent the formation of finite-time singularities in pressureless Eulerian dynamics. We derive uniform a priori estimates for $\|v_t\|_{H^{\ell+1}}$ to the equation $\eqref{Lagr1}_2$. For this, we recall several Sobolev inequalities which will be used in the rest of this paper.





\begin{lemma}\label{lem_sob} Let $k \geq 1$.
\begin{itemize}
\item[(i)] For any pair of functions $f,g \in H^k \cap L^\infty$, we obtain
\[
\|\nabla^k (fg)\|_{L^\infty} \ls \|f\|_{L^\infty}\|\nabla^k g\|_{L^\infty} + \|\nabla^k f\|_{L^\infty}\|g\|_{L^\infty}.
\]
Here $f \ls g$ represents that there exists a positive constant $C>0$ such that $f \leq Cg$.

Furthermore, if $\nabla f \in L^\infty$, we have
\[
\|\nabla^k (fg) - f \nabla^k g\|_{L^2} \ls \|\nabla f\|_{L^\infty}\|\nabla^{k-1}g\|_{L^2} + \|\nabla^k f\|_{L^2}\|g\|_{L^\infty}.
\]
\item[(ii)] For $f \in H^{[d/2]+1}$, we have
\[
\|f\|_{L^\infty} \ls \|\nabla f\|_{H^{[d/2]}}.
\]
\item[(iii)] For $f \in H^k \cap L^\infty$, let $p \in [1,\infty]$, and $h \in \mc^k(B(0,\|f\|_{L^\infty}))$ where $B(0,R)$ denotes the ball of radius $R>0$ centered at the origin in $\R^d$. Then there exists a positive constant $C = C(k,p,h)$ such that
\[
\|\nabla^k h(f)\|_{L^p} \leq C(1 + \|f\|_{L^\infty})^{k-1}\|\nabla^k f\|_{L^p}.
\]
\end{itemize}
\end{lemma}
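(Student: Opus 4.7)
All three statements are classical Moser/Sobolev-type inequalities, and my approach is to reduce each to a combination of the Leibniz (or Fa\`a di Bruno) formula, H\"older's inequality, the Gagliardo--Nirenberg interpolation, and the standard Sobolev embedding. I will carry out (i)--(iii) separately and in that order, since (ii) and (iii) both build on the interpolation machinery set up for (i).

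For (i) the starting point is the Leibniz identity
\[
\nabla^k(fg) = \sum_{j=0}^k \binom{k}{j}\,\nabla^j f\,\nabla^{k-j}g
\]
(with the usual multi-index combinatorial conventions suppressed). For the first bound I apply the triangle inequality in $L^\infty$ to this sum and then invoke the Gagliardo--Nirenberg interpolation $\|\nabla^j f\|_{L^\infty}\lesssim \|f\|_{L^\infty}^{1-j/k}\|\nabla^k f\|_{L^\infty}^{j/k}$, and the analogous estimate for $g$; a Young's inequality $ab\leq \frac{a^p}{p}+\frac{b^q}{q}$ with conjugate exponents $p=k/j$, $q=k/(k-j)$ collapses each intermediate term into the two endpoint combinations on the right-hand side. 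For the commutator bound the $j=0$ term in the Leibniz expansion is exactly $f\nabla^k g$ and cancels, leaving
\[
\nabla^k(fg)-f\nabla^k g \;=\; \sum_{j=1}^k \binom{k}{j}\,\nabla^j f\,\nabla^{k-j}g.
\]
Each term I estimate in $L^2$ by H\"older with exponents $(p_j,q_j)$ satisfying $\tfrac{1}{p_j}+\tfrac{1}{q_j}=\tfrac12$, then apply Gagliardo--Nirenberg twice: $\|\nabla^j f\|_{L^{p_j}}$ is interpolated between $\|\nabla f\|_{L^\infty}$ and $\|\nabla^k f\|_{L^2}$, while $\|\nabla^{k-j} g\|_{L^{q_j}}$ is interpolated between $\|g\|_{L^\infty}$ and $\|\nabla^{k-1} g\|_{L^2}$. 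A final Young's inequality collapses each product into the advertised endpoint pairs.

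Part (ii) I read off the standard Sobolev embedding $H^s(\R^d)\hookrightarrow L^\infty(\R^d)$ valid for $s>d/2$. Since $s=[d/2]+1>d/2$, we have $\|f\|_{L^\infty}\lesssim \|f\|_{H^{[d/2]+1}}$; on the bounded domain $\Omega_0$ the zeroth-order term $\|f\|_{L^2}$ appearing in $\|f\|_{H^{[d/2]+1}}$ can be absorbed into the derivative norms via a Poincar\'e--Wirtinger argument (after subtracting the mean, say), producing exactly $\|\nabla f\|_{H^{[d/2]}}$ on the right. I will simply cite the embedding rather than reprove it.

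For (iii) the key tool is the Fa\`a di Bruno formula
\[
\nabla^k h(f) \;=\; \sum_{\sum_{j} j m_j = k} c_{m_1,\ldots,m_k}\,h^{(m_1+\cdots+m_k)}(f)\prod_{j=1}^k (\nabla^j f)^{m_j},
\]
where the coefficients $c_{m_1,\ldots,m_k}$ are combinatorial constants. Because $h\in\mc^k(\overline{B(0,\|f\|_{L^\infty})})$, the scalar factor $h^{(m)}(f)$ is bounded by a constant $C(h,\|f\|_{L^\infty})$. Each product $\prod_j(\nabla^j f)^{m_j}$ I bound in $L^p$ by H\"older applied to the exponents $\{p/(k/j)\cdot m_j\}$ and then Gagliardo--Nirenberg in the form $\|\nabla^j f\|_{L^{kp/j}}\lesssim \|f\|_{L^\infty}^{1-j/k}\|\nabla^k f\|_{L^p}^{j/k}$. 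Raising to the power $m_j$, multiplying over $j$, and using the constraint $\sum j m_j=k$ together with $\sum m_j\leq k$, each product contributes at most $\|f\|_{L^\infty}^{k-\sum m_j}\|\nabla^k f\|_{L^p}$; inserting the introduced factor $(1+\|f\|_{L^\infty})^{k-1}$ gives a uniform dominating term over all index configurations, including the edge case $k=1$. The main obstacle throughout is bookkeeping rather than insight: the combinatorics of Fa\`a di Bruno and the many Gagliardo--Nirenberg exponent pairs must be tracked carefully, but since these are standard Moser-type inequalities, I expect in the final write-up to either cite a classical reference or sketch the argument above rather than fully expand every case.
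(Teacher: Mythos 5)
The paper offers no proof of Lemma \ref{lem_sob} at all --- it is introduced with ``we recall several Sobolev inequalities'' and treated as a collection of classical Moser-type estimates --- so there is no in-paper argument to compare against. Your route (Leibniz plus Gagliardo--Nirenberg interpolation plus Young for (i), Sobolev embedding for (ii), Fa\`a di Bruno plus Gagliardo--Nirenberg for (iii)) is exactly the standard textbook derivation of these facts, and your treatments of (i) and (iii) are sound: the exponent bookkeeping you describe ($1/p_j+1/q_j=1/2$ in the commutator estimate; $\sum_j jm_j=k$, $\sum_j m_j\le k$ in the chain-rule estimate) closes correctly. The only caveat worth recording for (i) and (iii) is that on the bounded domain $\Omega_0$ the pure interpolation inequality $\|\nabla^j f\|\lesssim\|f\|^{1-j/k}\|\nabla^k f\|^{j/k}$ requires an extension operator (or carries lower-order correction terms); you should say which convention you are using.

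There is, however, a genuine gap in your argument for (ii). Poincar\'e--Wirtinger controls $\|f-\bar f\|_{L^2}$ by $\|\nabla f\|_{L^2}$, so subtracting the mean only yields $\|f-\bar f\|_{L^\infty}\lesssim\|\nabla f\|_{H^{[d/2]}}$; the constant $\bar f$ is not controlled by any derivative norm, and it cannot be ``absorbed.'' Indeed the inequality as written fails for a nonzero constant function on a bounded domain (left side positive, right side zero), and for $d\le 2$ it also fails on $\R^d$ (a wide, flat bump of height one has arbitrarily small $\|\nabla f\|_{H^{[d/2]}}$). What is true, and what the paper actually uses downstream (see Remark \ref{rmk_gd}, which invokes $H^{\ell-1}(\Omega_0)\hookrightarrow L^\infty(\Omega_0)$ directly), is the inhomogeneous embedding $\|f\|_{L^\infty}\lesssim\|f\|_{H^{[d/2]+1}}$, whose right-hand side is equivalent to $\|f\|_{L^2}+\|\nabla f\|_{H^{[d/2]}}$; the statement of (ii) silently drops the zeroth-order term. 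You should either prove and use the embedding with the full $H^{[d/2]+1}$ norm, or, if you want the homogeneous form, restrict to $\R^d$ with $d\ge 3$, where $\dot H^{1}\cap\dot H^{[d/2]+1}\hookrightarrow L^\infty$ does hold because $1<d/2<[d/2]+1$. In either case the Poincar\'e--Wirtinger step should be removed.
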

For notational simplicity, we set
\[  \label{ppsi}
   \ppsi(x,y) := \psi(\eta_t(x) - \eta_{t-\tau}(y)).
\]
In the lemma below, we provide the $H^k$-estimate of the influence function $\ppsi$ by directly using Lemma \ref{lem_sob}.
\begin{lemma}\label{lem_useful0}Let $\eta_t \in \mc([-\tau,T];H^{\ell+1}(\Omega_0))$ for some $T>0$ and $\ell\in\N$. Then, for $1 \leq k \leq \ell + 1$, we have
\[
\|\nabla^k_x \ppsi(\cdot,y)\|_{L^2} \leq C(1 + d_X(t-\tau) + R_V \tau)^{k-1}\|\nabla^k_x \eta_t(\cdot)\|_{L^2}.
\]
In particular, we have
\[
\|\nabla^k_x \ppsi(\cdot,\cdot)\|_{L^2 \times L^\infty} \leq C(1 + d_X(t-\tau) + R_V \tau)^{k-1}\|\nabla^k_x \eta_t(\cdot)\|_{L^2}.
\]
\end{lemma}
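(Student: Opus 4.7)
The plan is to view $\widetilde\psi(\cdot,y)$ as the composition $\psi\circ f_y$, where $f_y(x):=\eta_t(x)-\eta_{t-\tau}(y)$, and then to apply the composition estimate in Lemma~\ref{lem_sob}(iii). Since $y$ is frozen in the $x$-differentiation, $\nabla_x^j f_y(x)=\nabla^j \eta_t(x)$ for every $j\geq 1$, so $\|\nabla_x^k f_y\|_{L^2}=\|\nabla_x^k \eta_t\|_{L^2}$; only the $L^\infty$-norm of $f_y$ needs to be controlled before Lemma~\ref{lem_sob}(iii) can be invoked.

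For the $L^\infty$-bound I would re-use the characteristic-flow argument that already appears inside the proof of Lemma~\ref{main_prop}. From \eqref{eta_flow} together with Lemma~\ref{lem_spt}, which provides $|v_s(x)|\leq R_V$ uniformly in $s\geq -\tau$ and $x\in\overline\Omega_0$, one finds
\[
|\eta_t(x)-\eta_{t-\tau}(y)| \leq |\eta_{t-\tau}(x)-\eta_{t-\tau}(y)| + \int_{t-\tau}^{t}|v_s(x)|\,\d s \leq d_X(t-\tau) + R_V\tau
\]
for all $x,y\in\overline\Omega_0$. In particular $\|f_y\|_{L^\infty}\leq d_X(t-\tau)+R_V\tau$, with a bound that does not depend on $y$.

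Feeding this into Lemma~\ref{lem_sob}(iii) with $p=2$ and $1\leq k\leq \ell+1$, and noting that Assumption~\ref{ass:psi} makes $\psi$ as smooth as needed on the ball in question, I obtain
\[
\|\nabla_x^k \widetilde\psi(\cdot,y)\|_{L^2} \leq C(1+\|f_y\|_{L^\infty})^{k-1}\|\nabla_x^k f_y\|_{L^2} \leq C(1+d_X(t-\tau)+R_V\tau)^{k-1}\|\nabla_x^k \eta_t\|_{L^2},
\]
which is the first claimed inequality. Since the right-hand side is independent of $y$, the essential supremum in $y$ can be taken without changing the constant, giving the $L^2_x\times L^\infty_y$ version.

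The whole argument is essentially one invocation of the Sobolev composition rule, so there is no deep obstacle. The only point that needs (very modest) care is arranging that the $L^\infty$-bound on $f_y$ is uniform in the parameter $y$; this is automatic because the flow estimate produces a bound that does not see $y$ explicitly. Everything else is routine bookkeeping.
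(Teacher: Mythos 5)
Your proof is correct and takes essentially the same route the paper intends: the paper gives no written proof of this lemma beyond saying it follows ``by directly using Lemma \ref{lem_sob}'', and your argument is exactly that, namely the composition estimate of Lemma \ref{lem_sob}(iii) with $p=2$ applied to $\psi\circ f_y$, combined with the uniform-in-$y$ bound $\|f_y\|_{L^\infty}\leq d_X(t-\tau)+R_V\tau$ coming from the characteristic-flow estimate in the proof of Lemma \ref{main_prop} and Lemma \ref{lem_spt}. Nothing further is needed.
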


\begin{remark}\label{rmk_useful0}Due to the smoothness of influence function $\psi$, we can easily get
\[
\|\nabla_x \ppsi(\cdot,\cdot)\|_{L^\infty \times L^\infty} \leq C\|\nabla_x \eta_t(\cdot)\|_{L^\infty}.
\]

\end{remark}

\begin{lemma}\label{lem_useful}
Let $\ell > d/2 + 1$ and $T > 0$. Suppose that the assumptions given in Theorem \ref{thm_main} hold. Then we have
\[
\int_{\Om_0} \ppsi(x,y) \rho_0(y)\,dy \geq \psi(C_2 + R_V\tau) =: \psi_m > 0 \quad \mbox{for all } x\in\Omega_0 \mbox{ and } t \in [0,T], 
\]
with $C_2 > 0$ given in Lemma \ref{prop_lt}. 

Furthermore, if there exists a positive constant $M > 0$ such that $\|v\|_{L^\infty((-\tau,T); H^{\ell + 1}(\Omega))} \leq M$, we have
\[
\|\nabla^k \eta_t\|_{L^2(\Omega_0)} \leq C(1 + M t) \quad \mbox{for } 1 \leq k \leq \ell+1 \mbox{ and } t \in [0,T], 
\]
for some $C>0$ independent of $t$.
\end{lemma}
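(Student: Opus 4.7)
The two assertions are essentially independent, so I would handle them in sequence.

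For the lower bound on $\int_{\Omega_0} \ppsi(x,y)\rho_0(y)\,dy$, the plan is to first show a uniform bound on $|\eta_t(x)-\eta_{t-\tau}(y)|$ for $x,y\in\Omega_0$ and $t\in[0,T]$, and then exploit the monotonicity of $\wpsi$. The triangle inequality gives
\[
   |\eta_t(x)-\eta_{t-\tau}(y)| \leq |\eta_t(x)-\eta_{t-\tau}(x)| + |\eta_{t-\tau}(x)-\eta_{t-\tau}(y)|.
\]
The first term is controlled by $R_V\tau$ using $\frac{d}{dt}\eta_t(x)=v_t(x)$ together with the velocity bound $|v_t(x)|\leq R_V$ from Lemma \ref{lem_spt}. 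The second term is at most $d_X(t-\tau)$, which for $t-\tau\geq 0$ is bounded by $C_2$ by Lemma \ref{prop_lt}, and for $t-\tau\in[-\tau,0]$ is bounded by $\max_{s\in[-\tau,0]}d_X(s)$, a quantity coming from the continuous initial data on the compact set $\overline{\Omega_0}$; this contribution is absorbed into the constant $C_2$. Consequently $|\eta_t(x)-\eta_{t-\tau}(y)|\leq C_2+R_V\tau$, and the nonincreasing property of $\wpsi$ together with $\wpsi(|\cdot|)=\psi(\cdot)$ yields $\ppsi(x,y)\geq \wpsi(C_2+R_V\tau)=\psi_m$. Integrating against the probability density $\rho_0$ finishes the claim.

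For the growth estimate on $\|\nabla^k\eta_t\|_{L^2(\Omega_0)}$, the plan is simply to integrate the flow equation and apply Minkowski's inequality. Since $\frac{d}{dt}\eta_t(x)=v_t(x)$ with $\eta_0(x)=x$, we have
\[
   \eta_t(x) = x + \int_0^t v_s(x)\,ds,
\]
so taking $k$ spatial derivatives in $x$ and using Minkowski's integral inequality,
\[
   \|\nabla^k \eta_t\|_{L^2(\Omega_0)} \leq \|\nabla^k \mathrm{id}\|_{L^2(\Omega_0)} + \int_0^t \|\nabla^k v_s\|_{L^2(\Omega_0)}\,ds.
\]
When $k=1$, $\nabla\mathrm{id}=I$ and $\|I\|_{L^2(\Omega_0)}=\sqrt{d}\,|\Omega_0|^{1/2}<\infty$ since $\Omega_0$ is bounded; for $k\geq 2$, $\nabla^k\mathrm{id}=0$. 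Combined with the assumed bound $\|v_s\|_{H^{\ell+1}(\Omega_0)}\leq M$ for $s\in(-\tau,T)$, we get $\|\nabla^k\eta_t\|_{L^2}\leq C + Mt\leq C(1+Mt)$ uniformly for $1\leq k\leq \ell+1$ and $t\in[0,T]$, with $C$ depending only on $|\Omega_0|$ and $d$.

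Both parts are short. The only mild subtlety lies in the first claim: one must carefully trace how $d_X$ on $[-\tau,0]$ is controlled by the regularity of the initial data and then combine with the flocking bound of Lemma \ref{prop_lt} on $[0,\infty)$ to obtain a single constant $C_2$ valid for all $t\in[0,T]$; once this is absorbed, the monotonicity of $\wpsi$ does the rest.
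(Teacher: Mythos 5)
Your proof is correct and follows essentially the same route as the paper: the first claim rests on the bound $|\eta_t(x)-\eta_{t-\tau}(y)|\leq d_X(t-\tau)+R_V\tau$ (which you re-derive via the triangle inequality and Lemma \ref{lem_spt}, exactly as in the proof of Lemma \ref{main_prop}) combined with the monotonicity of $\wpsi$ and Lemma \ref{prop_lt}, while the second claim is the same integration of the flow equation giving $\nabla^k\eta_t=\delta_{k,1}\mathbb{I}+\int_0^t\nabla^k v_s\,\d s$. Your remark about separately controlling $d_X$ on $[-\tau,0]$ by the initial data and absorbing it into $C_2$ is a point the paper glosses over, but it does not change the argument.
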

\begin{proof} It follows from the monotonicity of the influence function $\psi$, Lemma \ref{prop_lt}, and the
inequality \eqref{ineq_eta_xz} given in the proof of Lemma \ref{main_prop} that
\[
   |\eta_t(x)-\eta_{t-\tau}(y)| \leq d_X(t-\tau) + R_V \tau.
\]
Thus we obtain
\[
\int_{\Om_0} \ppsi(x,y) \rho_0(y)\,dy \geq \psi(C_2 + R_V\tau)\int_{\Om_0} \rho_0(y)\,dy = \psi_m.
\]
We now assume that $\|v\|_{L^\infty((-\tau,T); H^{\ell + 1}(\Omega))} \leq M$ for some $M > 0$. Taking the $k$-th derivative of $\eqref{Lagr1}_1$ yields
\[
   \nabla^k \eta_t = \delta_{k,1} \mathbb{I} + \int_0^t \nabla^k v_s\d s \quad \mbox{for} \quad 1 \leq k \leq \ell + 1,
\]
where $\mathbb{I}$ is the identity matrix.
This yields
\[
   \|\nabla^k \eta_t\|_{L^2(\Omega_0)} \leq C \left(1 + \int_0^t \|\nabla^k v_s\|_{L^2(\Omega_0)}\d s\right)  \leq C(1 +  Mt).
\]
\end{proof}

\begin{remark}\label{rmk_gd} It follows from Lemmas \ref{lem_useful0} and \ref{lem_useful}, Remark \ref{rmk_useful0}, and the Sobolev embedding $H^{\ell-1}(\Omega_0) \hookrightarrow L^\infty(\Omega_0)$
for $\ell > d/2 + 1$ that
\[
\|\nabla^k_x \ppsi(\cdot,\cdot)\|_{L^2 \times L^\infty} + \|\nabla_x \ppsi(\cdot,\cdot)\|_{L^\infty \times L^\infty} \leq C(1+Mt),
\]
for $1 \leq k \leq \ell+1$.
\end{remark}


We are now in a position to provide the uniform a priori estimate of $\|v_t\|_{H^{\ell+1}}$ in the lemma below.

\begin{lemma}\label{prop_apriori}
Let $\ell > d/2 + 1$ and $T > 0$.
Suppose that the assumptions given in Theorem \ref{thm_main} hold. Let $M > 0$ be any positive constant. Then if $\|v\|_{L^\infty((-\tau,T); H^{\ell + 1}(\Omega_0))} \leq M$, we have
\[
   \|v_t\|_{L^\infty((0,T);H^{\ell+1}(\Omega_0))} \leq C_0\|v_s\|_{L^\infty((-\tau,0);H^{\ell+1}(\Omega_0))},
\]
where $C_0 >0$ is a constant independent of $T$.
\end{lemma}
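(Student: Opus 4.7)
The plan is to establish energy estimates for $\|\nabla^k v_t\|_{L^2}$ for each $0\leq k\leq\ell+1$, exploiting the dissipative structure $\partial_t v_t + v_t = w_t$ of the Lagrangian equation, where I set $w_t(x):=F_t[\eta_t](x)/G_t[\eta_t](x)$ for the nonlocal averaging term. For $k=0$, I will argue directly: Lemma \ref{lem_spt} gives $\|v_t\|_{L^\infty(\Omega_0)}\leq R_V$, so boundedness of $\Omega_0$ together with the Sobolev embedding $H^{\ell+1}\hookrightarrow L^\infty$ yields $\|v_t\|_{L^2}\leq C\|v_s\|_{L^\infty((-\tau,0);H^{\ell+1})}$. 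For $1\leq k\leq\ell+1$, I will differentiate the equation $k$ times in $x$ and pair with $\nabla^k v_t$ in $L^2(\Omega_0)$; after absorbing via Young's inequality, this produces
\[
\frac{d}{dt}\|\nabla^k v_t\|_{L^2}^2 + \|\nabla^k v_t\|_{L^2}^2 \leq \|\nabla^k w_t\|_{L^2}^2.
\]

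The heart of the argument is the bound on $\|\nabla^k w_t\|_{L^2}$, for which I exploit that $w_t$ is a probability-weighted average of $v_{t-\tau}$. Writing $w_t(x)=\int_{\Omega_0}\phi_t(x,y)\rho_0(y)v_{t-\tau}(y)\,dy$ with $\phi_t(x,y):=\ppsi(x,y)/G_t[\eta_t](x)$, the normalization $\int_{\Omega_0}\phi_t(x,y)\rho_0(y)\,dy\equiv 1$ means every $x$-derivative of order $k\geq 1$ annihilates $y$-constants, so for any fixed $x_\star\in\overline{\Omega_0}$,
\[
\nabla_x^k w_t(x) = \int_{\Omega_0}\nabla_x^k\phi_t(x,y)\rho_0(y)\bigl[v_{t-\tau}(y)-v_{t-\tau}(x_\star)\bigr]\,dy.
\]
The bracketed difference is pointwise bounded by $d_V(t-\tau)$, whence Minkowski yields $\|\nabla^k w_t\|_{L^2}\leq C\,d_V(t-\tau)\,\|\nabla_x^k\phi_t\|_{L_x^2L_y^1(\rho_0)}$. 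To estimate the latter I Leibniz-expand $\phi_t=\ppsi\cdot G_t^{-1}$: Lemma \ref{lem_useful} supplies the uniform positive lower bound $G_t\geq\psi_m>0$, so Lemma \ref{lem_sob}(iii) applied to $h(s)=1/s$ controls $\|\nabla^j(1/G_t)\|_{L^2}$ in terms of $\|\nabla^j G_t\|_{L^2}$; both $\|\nabla^j_x\ppsi\|_{L^2_xL^\infty_y}$ and $\|\nabla^j G_t\|_{L^2}$ are in turn bounded by $C(1+Mt)$ via Lemma \ref{lem_useful0} and Remark \ref{rmk_gd}, while intermediate-order Leibniz terms are absorbed using Lemma \ref{lem_sob}(i)--(ii) and the embedding $H^{\ell-1}\hookrightarrow L^\infty$. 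The outcome will be a bound $\|\nabla^k w_t\|_{L^2}\leq C\,d_V(t-\tau)\,(1+Mt)^{N_k}$ for some $N_k$ depending only on $k$ and $d$.

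The final step invokes the exponential flocking estimate $d_V(t-\tau)\leq\bar d_V\,e^{-C_1(t-\tau)}$ from Lemma \ref{prop_lt}, with $\bar d_V=\max_{s\in[-\tau,0]}d_V(s)\leq C\|v_s\|_{L^\infty((-\tau,0);H^{\ell+1})}$ via Sobolev embedding. Consequently $\|\nabla^k w_s\|_{L^2}^2$ is $L^1$-integrable on $[0,\infty)$ with bound proportional to $\|v_s\|^2_{L^\infty((-\tau,0);H^{\ell+1})}$, and the integrating factor $e^t$ delivers
\[
\|\nabla^k v_t\|_{L^2}^2 \leq e^{-t}\|\nabla^k v_0\|_{L^2}^2 + \int_0^t e^{-(t-s)}\|\nabla^k w_s\|_{L^2}^2\,ds \leq C_0^2\|v_s\|^2_{L^\infty((-\tau,0);H^{\ell+1})},
\]
uniformly in $t\in(0,T)$; summing over $k=0,\ldots,\ell+1$ concludes the proof. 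I expect the main technical obstacle to be the full Leibniz/Moser bookkeeping for $\nabla_x^k(\ppsi/G_t)$ -- one must verify that every term produced retains either the explicit $d_V(t-\tau)$ factor inherited from the cancellation trick above or sufficient $\eta$-regularity to remain controlled by a power of $(1+Mt)$, so that the polynomial growth $(1+Mt)^{N_k}$ is genuinely defeated by the exponential decay of $d_V(t-\tau)$ and produces a source term integrable in time.
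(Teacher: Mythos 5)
Your proposal is correct and follows essentially the same route as the paper's proof: the energy inequality $\frac{\d}{\d t}\|\nabla^k v_t\|_{L^2}^2 + \|\nabla^k v_t\|_{L^2}^2 \leq \|\nabla^k w_t\|_{L^2}^2$, the cancellation coming from the normalization of the kernel to extract the factor $d_V(t-\tau)$, the Leibniz/Moser bookkeeping via Lemmas \ref{lem_sob}, \ref{lem_useful0}, \ref{lem_useful} giving polynomial growth $(1+Mt)^{N_k}$, and the observation that the exponential decay of $d_V$ from Lemma \ref{prop_lt} defeats this polynomial. The only cosmetic difference is that you subtract a fixed reference value $v_{t-\tau}(x_\star)$ where the paper averages the difference $v_{t-\tau}(y)-v_{t-\tau}(z)$ against $\rho_0(z)$; both implement the same cancellation.
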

\begin{remark} The constant $M>0$ appeared in Lemma \ref{prop_apriori} does not need to be small.
\end{remark}
\begin{proof}[Proof of Lemma \ref{prop_apriori}]
We start by estimating the $L^2(\Omega_0)$-norm of $v_t$ for a fixed $t\in (0,T)$.
We calculate
$$\begin{aligned}
\frac12\frac{\d}{\d t} \int_{\Omega_0} |v_t|^2\d x &=
\int_{\Omega_0} v_t \cdot \lt(\frac{\int_{\Om_0} \ppsi(x,y) v_{t-\tau}(y)\rho_0(y)\d y}{\int_{\Om_0} \ppsi(x,y)\rho_0(y)\d y}  - v_t\rt)\d x\cr
&\leq \|v_t\|_{L^1(\Omega_0)}\|v_{t-\tau}\|_{L^\infty(\Omega_0)} - \|v_t\|_{L^2(\Omega_0)}^2\cr
&\leq {|\Omega_0|}^{1/2}\|v_t\|_{L^2(\Omega_0)}\|v_s\|_{L^\infty((-\tau,0); L^\infty(\Omega_0))} - \|v_t\|_{L^2(\Omega_0)}^2\cr
&\leq -\frac12\|v_t\|_{L^2(\Omega_0)}^2 + C\|v_s\|_{L^\infty((-\tau,0);H^{\ell+1}(\Omega_0))}^2,
\end{aligned}$$
where we used Lemma \ref{lem_spt} and the Cauchy-Schwartz inequality,
and $C > 0$ only depends on $|\Omega_0|$ and the space dimension $d\in\N$.
An application of the Gronwall lemma gives then
\[
   \sup_{0 \leq t \leq T}\|v_t\|_{L^2(\Omega_0)}^2 \leq \|v_0\|_{L^2(\Omega_0)}^2
      + C\|v_s\|_{L^\infty((-\tau,0);H^{\ell+1}(\Omega_0))}^2 \leq C\|v_s\|_{L^\infty((-\tau,0);H^{\ell+1}(\Omega_0))}^2.
\]

Next, we estimate the $H^1(\Omega_0)$-norm of $v_t$,
$$\begin{aligned}
   \frac12\frac{\d}{\d t}\int_{\Om_0}|\nabla v_t|^2\d x &=
   \int_{\Omega_0} \nabla v_t \cdot \nabla
      \left( \frac{\int_{\Omega_0} \ppsi(x,y) v_{t-\tau}(y)\rho_0(y)\d y}{\int_{\Om_0} \ppsi(x,y)\rho_0(y)\d y} - v_t \right) \d x \\
   & =: - \|\nabla v_t\|_{L^2(\Omega_0)}^2 + I_1.
\end{aligned}$$
Note that 
$$\begin{aligned}
&\lt|\nabla_x \lt( \frac{\int_{\Omega_0} \ppsi(x,y) v_{t-\tau}(y)\rho_0(y)\d y}{\int_{\Om_0} \ppsi(x,y)\rho_0(y)\d y}\rt)\rt|\cr
&\quad = \lt|\frac{\iint_{\Omega_0 \times \Om_0} (\nabla_x \ppsi(x,y)) \ppsi(x,z)(v_{t-\tau}(y) - v_{t-\tau}(z))\rho_0(y)\rho_0(z)\d y\d z}{\lt(\int_{\Om_0} \ppsi(x,y)\rho_0(y)\d y\rt)^2}\rt|\cr
&\quad \leq \frac{1}{\psi_m^2}d_V(t-\tau)\left(\iint_{\Omega_0\times\Omega_0}|\nabla (\ppsi(x,y))|^2 \rho_0(y) \rho_0(z) \d y\d z\right)^{1/2} \\
&\qquad\qquad\qquad\times
\left(\iint_{\Omega_0\times\Omega_0}|\ppsi(x,y)|^2 \rho_0(y) \rho_0(z) \d y\d z\right)^{1/2}\cr
&\quad \leq \frac{1}{\psi_m^2}d_V(t-\tau)\lt(\int_{\Omega_0}|\nabla (\ppsi(x,y))|^2 \rho_0(y)\d y\rt)^{1/2},
\end{aligned}$$
due to Lemma \ref{lem_useful}, $\|\psi\|_{L^\infty} \leq 1$ and the normalization $\int_{\Omega_0} \rho_0(y) \d y = 1$.
The Cauchy-Schwarz inequality yields
\[
   |I_1| \leq \frac{1}{\psi_m^2} d_V(t-\tau)\|\nabla v_t\|_{L^2(\Omega_0)}
      \left(\iint_{\Omega_0 \times \Omega_0}|\nabla_x \ppsi(x,y)|^2 \rho_0(y)\d x\d y \right)^{1/2},
\]
and using further the estimate \eqref{statement2} on $d_V(t-\tau)$ together with Remark \ref{rmk_gd}, we obtain
$$\begin{aligned}
|I_1|
&\leq \frac{C}{\psi_m^2} \left( \max_{s \in [-\tau,0]}d_V(s) \right) e^{-C (t-\tau)}\|\nabla v_t\|_{L^2(\Omega_0)}(1 + \e_1t) |\Omega_0|^{1/2} \cr
&\leq C\|v_s\|_{L^\infty((-\tau,0)\times\Omega_0)}\|\nabla v_t\|_{L^2(\Omega_0)}\cr
&\leq \frac12\|\nabla v_t\|_{L^2(\Omega_0)}^2 + C\|v_s\|_{L^\infty((-\tau,0)\times\Omega_0)}^2,
\end{aligned}$$
where we used the elementary inequality $e^{-C t}(1 + M t) \leq C_{M}$ for all $t\geq 0$, with $C_{M} > 0$ independent of $T$.
Therefore, we have
\[
      \frac12\frac{\d}{\d t} \|\nabla v_t\|_{L^2(\Omega_0)}^2
         \leq - \frac12 \|\nabla v_0\|_{L^2(\Omega_0)}^2 + C\|v_s\|_{L^\infty((-\tau,0)\times\Omega_0)}^2,
\]
which implies
\[
   \sup_{0 \leq t \leq T}\|\nabla v_t\|_{L^2(\Omega_0)}^2
      \leq \|\nabla v_0\|_{L^2(\Omega_0)}^2 + C\|v_s\|_{L^\infty((-\tau,0); H^{\ell+1}(\Omega_0))}^2
      \leq C\|v_s\|_{L^\infty((-\tau,0); H^{\ell+1}(\Omega_0))}^2,
\]
where we used the embedding $H^{\ell+1}(\Omega_0) \hookrightarrow L^\infty(\Omega_0)$.

Finally, we derive the estimate of the $H^k$-norm of $v_t$ for general $k\in\N$.
We first notice that for $1 \leq k \leq \ell$,
$$\begin{aligned}
&\nabla^{k+1}_x \lt( \frac{\int_{\Om_0} \ppsi(x,y) v_{t-\tau}(y)\rho_0(y)\d y}{\int_{\Om_0} \ppsi(x,y)\rho_0(y)\d y}\rt)\cr
&= \nabla^{k}_x\lt( \frac{\iint_{\Om_0 \times \Om_0} \nabla_x (\ppsi(x,y)) \ppsi(x,z)(v_{t-\tau}(y) - v_{t-\tau}(z))\rho_0(y)\rho_0(z)\d y\d z}{\lt(\int_{\Om_0} \ppsi(x,y)\rho_0(y)\d y\rt)^2}\rt)\cr
&=: \sum_{1 \leq k' \leq k-1}\binom{k}{k'} \nabla^{k'}_x I_2(x) \nabla^{k-k'}_x I_3(x) (1 - \delta_{k,1}) + I_2(x) \nabla_x^k I_3(x) + \nabla_x^k I_2(x) I_3(x)\cr
&=: J_1(x) + J_2(x) + J_3(x),
\end{aligned}$$
where
$$\begin{aligned} 
I_2(x) &= \lt( \int_{\Om_0} \ppsi(x,y)\rho_0(y)\d y\rt)^{-2},\cr
I_3(x) &= \int_{\Om_0 \times \Om_0} (\nabla_x \ppsi(x,y)) \ppsi(x,z)(v_{t-\tau}(y) - v_{t-\tau}(z))\rho_0(y)\rho_0(z)\d y\d z.
\end{aligned}$$
Note that, due to Lemma \ref{lem_useful},
\[
   I_2(x) \leq \psi_m^{-2} \qquad \mbox{for } x \in \Omega_0.
\]
Furthermore, for $1 \leq k \leq \ell$,
$$\begin{aligned}
|\nabla_x^k I_2| &\ls \lt| \int_{\Om_0} \nabla^k (\ppsi (x,y))\rho_0(y)\d y \rt| \cr
&\quad + (1 - \delta_{k,1}) \sum_{ \substack{\alpha+ \beta = k \\ \alpha,\beta \geq 1}}\left| \int_{\Omega_0} \nabla_x^\alpha (\ppsi (x,y))\rho_0(y)\d y \right| \left| \int_{\Omega_0} \nabla_x^\beta (\ppsi (x,y))\rho_0(y)\d y \right|\cr
&=: I_2^1 + I_2^2,
\end{aligned}$$
where $L^2$-norm of $I_2^1$  can be easily estimated as
$$\begin{aligned}
\int_{\Om_0} |I_2^1|^2 \,\d x &\ls \int_{\Om_0} \lt| \int_{\Om_0} |\nabla^k (\ppsi (x,y))|\rho_0(y)\d y\rt|^2\,dx \cr
&\ls \lt(\int_{\Omega_0} \|\nabla_x^k \ppsi(\cdot,y)\|_{L^2(\Omega_0)}\rho_0(y)\d y\rt)^2\cr
&\ls \int_{\Omega_0} \|\nabla_x^k \ppsi(\cdot,y)\|_{L^2(\Omega_0)}^2\rho_0(y)\d y\cr
&\ls \|\nabla_x^k \ppsi(\cdot,\cdot)\|_{L^2 \times L^\infty}^2,
\end{aligned}$$
due to Minkowski integral inequality. For the estimate of $I_2^2$, we again use Minkowski integral inequality together with Moser-type inequalities to obtain
$$\begin{aligned}
  &\int_{\Omega_0} | I_2^2|^2\,\d x \cr
  &\quad \ls \sum_{ \substack{\alpha+ \beta = k \\ \alpha,\beta \geq 1}}\int_{\Om_0}\lt| \int_{\Om_0 \times \Om_0}|\nabla^\alpha_x(\ppsi(x,y))||\nabla^\beta_x(\ppsi(x,z))| \rho_0(y)\rho_0(z)\d y\d z\rt|^2\,dx \cr
&\quad \ls \sum_{ \substack{\alpha+ \beta = k \\ \alpha,\beta \geq 1}} \lt(\iint_{\Omega_0 \times \Omega_0} \bigl\lVert
     |\nabla_x^\alpha \ppsi(\cdot,y)| |\nabla_x^\beta \ppsi(\cdot,z)| \bigr\rVert_{L^2(\Omega_0)} \rho_0(y)\rho_0(z)\d y\d z\rt)^2\cr
  &\quad \ls \sum_{ \substack{\alpha+ \beta = k \\ \alpha,\beta \geq 1}} \iint_{\Omega_0 \times \Omega_0} \bigl\lVert
     |\nabla_x^\alpha \ppsi(\cdot,y)| |\nabla_x^\beta \ppsi(\cdot,z)| \bigr\rVert_{L^2(\Omega_0)}^2 \rho_0(y)\rho_0(z)\d y\d z\cr
&\quad \ls \sum_{ \substack{\alpha+ \beta = k \\ \alpha,\beta \geq 1}} \iint_{\Omega_0 \times \Omega_0}
      \| \nabla_x^\alpha \ppsi(\cdot,y)\|_{H^1(\Omega_0)}^2 \|\nabla_x^\beta \ppsi(\cdot,z)\|_{H^1(\Omega_0)}^2 \rho_0(y)\rho_0(z)\d y\d z\cr
&\quad \ls \|\nabla_x \ppsi(\cdot,\cdot)\|_{H^k \times L^\infty}^4.
\end{aligned}$$
Thus we obtain
\[
\|\nabla I_2\|_{H^{k-1}} \leq  C\lt( \|\nabla_x^k \ppsi(\cdot,\cdot)\|_{L^2 \times L^\infty}^2 + \|\nabla_x \ppsi(\cdot,\cdot)\|_{H^k \times L^\infty}^4\rt) \leq C(1 + M t)^4, 
\]
for $1 \leq k \leq \ell$. We also easily find from Remark \ref{rmk_gd} that
\[
|I_3(x)| \leq d_V(t-\tau)\|\nabla \ppsi(\cdot,\cdot)\|_{L^\infty \times L^\infty} \leq Cd_V(t-\tau)(1 + M t).
\] 
In a similar fashion as before, we get that for $1 \leq k \leq \ell$
$$\begin{aligned}
&\int_{\Om_0} |\nabla^k I_3|^2\,dx \cr
&\quad \leq d_V^2(t-\tau)\int_{\Om_0} \lt(\int_{\Om_0 \times \Om_0} |\nabla^k \lt(\nabla(\ppsi(x,y)) \ppsi(x,z) \rt) \rho_0(y)\rho_0(z)\,dydz\rt)^2 dx\cr
&\quad \leq d_V^2(t-\tau)\lt(\int_{\Om_0 \times \Om_0} \|\nabla^k \lt(\nabla(\ppsi(\cdot,y)) \ppsi(\cdot,z) \rt)\|_{L^2} \rho_0(y)\rho_0(z)\,dydz\rt)^2\cr
&\quad \leq Cd_V^2(t-\tau)\lt(\int_{\Om_0 \times \Om_0} \lt(\|\nabla (\ppsi(\cdot,y))\|_{L^\infty}\|\nabla^k (\ppsi(\cdot,z))\|_{L^2} \rt) \rho_0(y)\rho_0(z)\,dydz\rt)^2\cr
&\qquad + Cd_V^2(t-\tau)\lt(\int_{\Om_0} \|\nabla^{k+1} (\ppsi(\cdot,z))\|_{L^2}  \rho_0(z)\,dz\rt)^2\cr
&\quad \leq Cd_V^2(t-\tau)(1 + M t)^4,
\end{aligned}$$
and, subsequently, this implies
\[
\|\nabla I_3\|_{H^{k-1}} \leq Cd_V(t-\tau)(1 + M t)^2.
\]
Using the above estimates, we have that for $1 \leq k \leq \ell$
$$\begin{aligned}
\|J_1\|_{L^2} &\leq C(1 - \delta_{k,1})\lt(\int_{\Om_0}\lt|\sum_{1 \leq k' \leq k-1}\nabla^{k'}I_2(x) \nabla^{k-k'}I_3(x)\rt|dx \rt)^{1/2}\cr
&\leq C(1 - \delta_{k,1})\sum_{1 \leq k' \leq k-1} \| |\nabla^{k'} I_2| |\nabla^{k-k'}I_3|\|_{L^2}\cr
&\leq C(1 - \delta_{k,1})\sum_{1 \leq k' \leq k}\|\nabla^{k'}I_2\|_{H^1}\|\nabla^{k-k'}I_3\|_{H^1}\cr
&\leq C\|\nabla I_2\|_{H^{k-1}}\|\nabla I_3\|_{H^{k-1}} \leq Cd_V(t-\tau)(1 + M t)^4,\cr
\|J_2\|_{L^2} &\leq \|I_2\|_{L^\infty}\|\nabla^k I_3\|_{L^2} \leq Cd_V(t-\tau)(1 + M t)^4,\cr
\|J_3\|_{L^2} &\leq \|I_3\|_{L^\infty}\|\nabla^k I_2\|_{L^2} \leq Cd_V(t-\tau)(1 + M t)^6.
\end{aligned}$$
This yields
$$\begin{aligned}
&\lt\|\nabla^{k+1} \lt( \frac{\int_{\Om_0} \ppsi(x,y) v_{t-\tau}(y)\rho_0(y)\,dy}{\int_{\Om_0} \ppsi(x,y)\rho_0(y)\,dy}\rt) \rt\|_{L^2} \cr
&\qquad \leq Cd_V(t-\tau)(1 + M t)^8 \leq C\|v_s\|_{L^\infty(-\tau,0;H^{\ell+1})},
\end{aligned}$$
for $1 \leq k \leq \ell$. Finally, we have
$$\begin{aligned}
&\frac12\frac{d}{dt}\int_{\Om_0}|\nabla^{k+1} v_t|^2\,dx  \cr
&\quad = - \|\nabla^{k+1} v_t\|_{L^2}^2 + \int_{\Om_0} \nabla^{k+1} \lt( \frac{\int_{\Om_0} \ppsi(x,y) v_{t-\tau}(y)\rho_0(y)\,dy}{\int_{\Om_0} \ppsi(x,y)\rho_0(y)\,dy}\rt) \cdot \nabla^{k+1} v_t(x)\,dx\cr
&\quad \leq - \frac12\|\nabla^{k+1} v_t\|_{L^2}^2 + C\|v_s\|_{L^\infty(-\tau,0;H^{\ell+1})}^2,
\end{aligned}$$
for $1 \leq k \leq \ell$. Hence we conclude that
\[
\|\nabla^2 v_t\|_{H^{\ell-1}} \leq C\|v_s\|_{L^\infty(-\tau,0;H^{\ell+1})}.
\]
This completes the proof.
\end{proof}

\begin{proof}[Proof of Theorem \ref{thm_main}] The proof of global existence and uniqueness of classical solutions are easily obtained from Lemma \ref{lem_local} and Lemma \ref{prop_apriori}. Once we obtained the global-in-time classical solutions, the all computations in Section \ref{sec:flocking} are justified. This completes the proof.
\end{proof}

As mentioned in Section \ref{sec:main}, in order to back to the Eulerian variable from the Lagrangian formulation, we need to show that the characteristic flow $\eta_t$ defined in \eqref{eta_flow} is a diffeomorphism. Thus, in the rest of this section, we provide the estimate of det$\nabla \eta_t$. For this, we first need to estimate the exponential decay of $\nabla v_t$ in $L^2$-norm.

\begin{lemma}\label{lem_decay2}
Let Assumption \ref{ass:psi} be verified with some $\ell > \frac{d}2+1$, let \eqref{R_V} and \eqref{iii} hold. Then we have
\[
\|\nabla v_t\|_{L^2(\Omega_0)} \leq \|\nabla v_0\|_{L^2(\Omega_0)}e^{-t} + C\left( \max_{s \in [-\tau,0]}d_V(s) \right) e^{-at} \quad \mbox{for} \quad t \geq 0,
\]
for some $a > 0$.
\end{lemma}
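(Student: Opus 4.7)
The plan is to revisit the $H^1$-level estimate that was already derived inside the proof of Lemma \ref{prop_apriori} and simply extract a decay rate from it using the exponential decay of $d_V$ provided by Lemma \ref{prop_lt}. More precisely, taking the inner product of the equation for $v_t$ differentiated once in $x$ with $\nabla v_t$ and integrating over $\Omega_0$, we obtained
\[
\frac12 \frac{\d}{\d t}\|\nabla v_t\|_{L^2(\Omega_0)}^2 = -\|\nabla v_t\|_{L^2(\Omega_0)}^2 + I_1,
\]
where the cross term $I_1$ was bounded, via Lemma \ref{lem_useful}, Remark \ref{rmk_gd}, and the uniform lower bound $G_t[\eta_t(x)] \geq \psi_m$, by
\[
|I_1| \leq \frac{C}{\psi_m^2}\, d_V(t-\tau)\,(1+Mt)\,\|\nabla v_t\|_{L^2(\Omega_0)}.
\]
This is the starting point; the rest of the argument is essentially a linear ODE manipulation.

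Next I would invoke the exponential flocking decay from Lemma \ref{prop_lt}, which gives $d_V(t-\tau) \leq D\, e^{-C_1(t-\tau)}$ with $D := \max_{s\in[-\tau,0]} d_V(s)$. Fix any $a \in (0,C_1)$ with $a \neq 1$. Since $e^{-C_1 t}(1+Mt) \leq C_{a,M}\, e^{-at}$ uniformly in $t\geq 0$, the bound on $I_1$ can be absorbed into
\[
|I_1| \leq C\, D\, e^{-a t}\, \|\nabla v_t\|_{L^2(\Omega_0)},
\]
for a constant $C>0$ independent of $t$. Plugging this back into the energy identity and dividing by $\|\nabla v_t\|_{L^2(\Omega_0)}$ (on the set where it is nonzero; elsewhere the inequality is trivial) yields the scalar differential inequality
\[
\frac{\d}{\d t}\|\nabla v_t\|_{L^2(\Omega_0)} \leq -\|\nabla v_t\|_{L^2(\Omega_0)} + C\, D\, e^{-a t}.
\]

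Finally, I would solve this inequality by the standard integrating factor $e^{t}$, giving
\[
\bigl(e^{t}\|\nabla v_t\|_{L^2(\Omega_0)}\bigr)' \leq C\, D\, e^{(1-a)t}.
\]
Integration from $0$ to $t$ and multiplication by $e^{-t}$ produce
\[
\|\nabla v_t\|_{L^2(\Omega_0)} \leq \|\nabla v_0\|_{L^2(\Omega_0)}\, e^{-t} + \frac{C\,D}{|1-a|}\, e^{-\min(a,1)\, t},
\]
which, after renaming constants and the decay rate (still called $a$), is precisely the claimed estimate. The only subtle point is the choice of $a$: one needs $a>0$ strictly less than the flocking decay rate $C_1$ from Lemma \ref{prop_lt} to absorb the polynomial factor $(1+Mt)$, and choosing $a \neq 1$ avoids the spurious $t\, e^{-t}$ term from the resonance; both are free and harmless, so there is no genuine obstacle here—the essential content of the lemma has already been built into the a priori estimate of Lemma \ref{prop_apriori}.
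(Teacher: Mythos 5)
Your proposal is correct and follows essentially the same route as the paper: both start from the $H^1$ energy estimate already established in the proof of Lemma \ref{prop_apriori}, insert the exponential decay of $d_V$ from Lemma \ref{prop_lt} to absorb the polynomial growth factor, and then integrate the resulting linear scalar differential inequality with the integrating factor $e^{t}$ (what the paper compresses into ``applying Gronwall's inequality''). Your explicit remarks on choosing $a$ strictly below the flocking rate and avoiding the resonance at $a=1$ are details the paper leaves implicit, but they do not change the argument.
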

\begin{proof}
Similarly as in Lemma \ref{prop_apriori}, we get
\[
\frac12\frac{\d}{\d t}\int_{\Om_0}|\nabla v_t|^2\d x \leq - \|\nabla v_t\|_{L^2(\Omega_0)}^2 + C\left( \max_{s \in [-\tau,0]}d_V(s) \right) e^{-C (t-\tau)}\|\nabla v_t\|_{L^2(\Omega_0)}(1 + t).
\]
This yields
\[
\frac{d}{dt} \|\nabla v_t\|_{L^2(\Omega_0)} \leq -\|\nabla v_t\|_{L^2(\Omega_0)} + C\left( \max_{s \in [-\tau,0]}d_V(s) \right)(1 + t)e^{-Ct}.
\]
Applying Gronwall's inequality gives
\[
\|\nabla v_t\|_{L^2(\Omega_0)} \leq \|\nabla v_0\|_{L^2(\Omega_0)}e^{-t} + C\left( \max_{s \in [-\tau,0]}d_V(s) \right) e^{-at} \quad \mbox{for} \quad t \geq 0,
\]
for some $a > 0$.
\end{proof}
We are ready to provide the details of the proof for Theorem \ref{thm:Eulerian}.
\begin{proof}[Proof of Theorem \ref{thm:Eulerian}]It suffices to show that det$(\nabla \eta_t) > 0$ for all $t \geq 0$. It follows from \eqref{Lagr1} that
\[
\nabla_x\eta_t(x) = \mathbb{I} + \int_0^t \nabla_x v_s(x)\,ds.
\]
Using Sobolev-Gagliardo-Nirenberg inequality together with the decay estimate in Lemma \ref{lem_decay2} yields
\[
\|\nabla_x v_t\|_{L^\infty} \leq C\|\nabla_x v_t\|_{H^s} \leq C\|\nabla_x v_t\|_{L^2}^{1-\beta}\|\nabla_x v_t\|_{H^{s+1}}^\beta \leq C\e e^{-bt},
\]
for some $b >0$ and $\beta \in (0,1)$, where $s > \frac d2$. Note that det$(\mathbb{I} + \e_0 A) = 1 + \e_0$tr$(A) + \mathcal{O}(\e_0^2)$ for some $\e_0> 0$ and 
\[
\lt|\int_0^t \nabla_x v_s(x)\,ds\rt| \leq C\e\int_0^t e^{-bs}\,ds \leq C\e.
\]
This provides
\[
\mbox{det}(\nabla_x\eta_t(x)) = \mbox{det}\lt(\mathbb{I} + \int_0^t \nabla_x v_s(x)\,ds \rt) \geq 1 - C\e - \mathcal{O}(\e^2).
\]
Hence, by choosing $\e > 0$ small enough, we conclude the desired result.
\end{proof}

%
%
\section{Critical threshold phenomenon in one dimension - Proof of Theorem \ref{thm_cri}} \label{sec:critical}
We conclude the paper by showing a critical threshold phenomenon in the spatially
one-dimensional version ($d=1$) of the system \eqref{Eul1}--\eqref{Eul2},
which can be rewritten as
\(
   \pa_t \rho_t + \pa_x(\rho_t u_t) &=& 0,  \label{crit1} \\
   \pa_t u_t + u_t \pa_x u_t  &=&  \frac{\int_\R \psi(x-y) u_{t-\tau}(y) \rho_{t-\tau}(y)\d y}{\int_\R \psi(x-y) \rho_{t-\tau}(y)\d y} - u_t.
   \label{crit2}
\)
We proceed in the spirit of \cite{CCTT, CCZ, TT} and set $w_t(x) := \pa_x u_t(x)$ and introduce the differential operator
$D_t := \pa_t + u_t \pa_x$.
Taking the formal $x$-derivative of \eqref{crit2}, we obtain
\[
   D_t w_t + w_t^2 = \partial_x \left( \frac{\int_\R \psi(x-y) u_{t-\tau}(y) \rho_{t-\tau}(y)\d y}{\int_\R \psi(x-y) \rho_{t-\tau}(y)\d y} \right)
     - w_t.
\]
Using Assumption \ref{ass:psi} with some $\ell \geq 1$, we have the bound
\[
   \Norm{u_t}_{\mc([0,\infty); L^\infty(\Omega_t))} \leq \Norm{v_t}_{\mc([0,\infty)\times\Omega_0)} \leq R_V
\]
derived in Lemma \ref{lem_spt}. 
This together with the assumption $|\psi'| \leq C|\psi|$ yields
\[
   \left| \partial_x \left( \frac{\int_{\R} \psi(x-y) u_{t-\tau}(y) \rho_{t-\tau}(y)\d y}{\int_{\R} \psi(x-y) \rho_{t-\tau}(y)\d y} \right) \right|
     \leq
     2CR_V = \overline C. 
\]
Consequently,
\( \label{ineq_d}
   |D_t w_t + w_t^2 + w_t | \leq \overline C,
\)
where the constant $\overline C>0$ is independent of time.
We can now distinguish the following two cases:
\begin{itemize}
\item \textbf{Subcritical case:}
Assume that $4\overline C \leq 1$. Then it follows from \eqref{ineq_d} that
\[
   D_t w_t \geq - (w_t^2 + w_t + \overline C) = - \left( w_t - w^{1,+})(w_t - w^{1,-} \right)
\]
with
\[
   w^{1,\pm} := \frac{-1 \pm \sqrt{1 - 4\overline C}}{2}.
\]
Consequently, if $w_0 \geq w^{1,-}$, then $w_t \geq w^{1,-}$ for all $t \geq 0$.
\vspace{2mm}

\item \textbf{Supercritical case:} Again, using \eqref{ineq_d}, we have
\[
   D_t w_t \leq -(w_t^2 + w_t - \overline C) = - (w_t - w^{2,+})(w_t - w^{2,-})
\]
with
\[
   w^{2,\pm} :=\frac{-1 \pm \sqrt{1 + 4\overline C}}{2}.
\]
Consequently, if $w_0 < w^{2,-}$, then $w_t \leq w^{2,-}$ for all $t \geq 0$.
Due to $w^{2,-} < w^{2,+}$, this further implies that
\[
   D_t w_t \leq -(w_t - w^{2,-})^2,
\]
and solving this differential inequality gives
\[
   w_t \leq \left( t + \frac{1}{w_0 - w^{2,-}} \right)^{-1} + w^{2,-}.
\]
Thus $w_t$ is only defined on a finite time interval
and diverges to $-\infty$ before $t = (w^{2,-} - w_0)^{-1}$.
\end{itemize}
Collecting the above observations completes the proof of Theorem \ref{thm_cri}.

%
%
\section*{Acknowledgments}
YPC was supported by National Research Foundation of Korea(NRF) grant funded by the Korea government(MSIP) (No. 2017R1C1B2012918) and the Alexander Humboldt Foundation through the Humboldt Research Fellowship for Postdoctoral Researchers.
JH was supported by KAUST baseline funds and KAUST grant no. 1000000193.

%
%

\end{document}